\newtheorem{thm}{Theorem}[section]
\newtheorem{lem}[thm]{Lemma}
\newtheorem{prop}[thm]{Proposition}
\theoremstyle{definition}
\newtheorem{defn}[thm]{Definition}
\theoremstyle{remark}
\newtheorem{rem}[thm]{Remark}
\newtheorem{expl}[thm]{Example}
\numberwithin{equation}{section}
\newcommand{\A}{\mathcal{A}}
\newcommand{\qmboxq}[1]{\quad\mbox{#1}\quad}
\def\per{\mathbf{Per}}
\def\fin{\mathbf{Fin}}
\def\A{\mathbb A}
\def\N{\mathbb N}
\def\Z{\mathbb Z}
\def\Q{\mathbb Q}
\def\R{\mathbb R}
\def\C{\mathbb C}
\def\P{\mathbb P}
\def\decpoint{\bullet}
\newcommand{\udots}{\mathinner{%
 \mkern1mu\raise1pt\hbox{.}%
 \mkern2mu\raise4pt\hbox{.}%
 \mkern2mu\raise7pt\vbox{\kern7pt\hbox{.}}\mkern1mu}}
\begin{document}

\title[beta-expansions for $p$-adic numbers]{beta-expansions of $p$-adic numbers}%

\author[K.~Scheicher]{Klaus~Scheicher{$^1$}}
\address{\tiny $^{1}$Institut f\"ur Mathematik, Universit\"at f\"ur Bodenkultur,
Gregor-Mendel-Stra\ss{}e 33, A-1180 Vienna, Austria}
\email{klaus.scheicher@boku.ac.at}

\author[V.~F.~Sirvent]{V\'{\i}ctor~F.~Sirvent{$^2$}}
\address{\tiny $^{2}$Departamento de Matem\'aticas, Universidad Sim\'on Bol\'{\i}var, Apartado 89000,
Caracas 1086-A, Venezuela}
\email{vsirvent@usb.ve}

\author[P.~Surer]{Paul~Surer{$^3$}}
\address{\tiny $^3$Engerthstra\ss{}e 52, A-1200 Vienna, Austria}
\email{palovsky@yahoo.com}

\thanks{The first and second authors were partially supported by the FWF project Nr. P23990.}

\subjclass{11A63,11K41}%
\keywords{$p$-adic numbers, Pisot-Chabauty numbers, beta-expansions, shift radix systems}%

\date{\today}%

\begin{abstract}
In the present article, we introduce beta-expansions in the ring $\Z_p$ of $p$-adic integers. We characterise
the sets of numbers with eventually periodic and finite expansions.
\end{abstract}
\maketitle
\section{Introduction}
For a real number $\beta>1$, the \emph{beta-transformation} $T=T_\beta$ is defined for
$x\in[0,1]$ by
\begin{equation}
\label{betatrans}
T(x)=\beta x-\lfloor\beta x\rfloor.
\end{equation}
Denote $T^0(x):=x$ and $T^n(x):=T(T^{n-1}(x))$. By iterating this map, we obtain an expansion
$$
x=\frac{x_1}{\beta}+\frac{x_2}{\beta^{2}}+\cdots,
$$
where $x_n=\lfloor\beta T^{n-1}(x)\rfloor$.
We will call the sequence
\begin{equation}
\label{dbeta}
d_\beta(x):=\decpoint x_1 x_2 \cdots
\end{equation}
the \emph{beta-expansion} of $x$.
This setting was introduced by R{\'e}nyi and Parry (\emph{cf.}~\cite{Par60,Ren57}).
For beta-expansions of real numbers, there exist several results for the case when the base
$\beta$ is a Pisot number. Bertrand and Schmidt \cite{Bertrand:77,Schmidt:77} proved
that if $\beta$ is Pisot, then
the set of eventually periodic beta-expansions consists of the non-negative elements of $\Q(\beta)$.
Schmidt proved a partial converse of this statement, namely,
if the non-negative elements of $\Q$ have eventually periodic expansions, then $\beta$ is a Pisot or Salem number.

In the case of Pisot numbers, it is proved that the associated shift space is sofic
(\emph{cf.}~\cite{Bertrand:77,Schmidt:77}).
Soficness could also be proved  for Salem numbers of degree four (\emph{cf.}~\cite{Boyd:87}).

For real beta-expansions
there is a long standing conjecture stated by Schmidt~\cite{Schmidt:77},
that the set of eventually periodic beta-expansions corresponds to the rational
numbers in the interval $[0,1)$ if and only if $\beta$ is a Pisot or Salem number.
It has been proved in \cite{Boyd:87}
that the beta-expansion of $1$ is eventually periodic for Salem numbers of degree four.
It is conjectured by Boyd~\cite{BoydSalemSix},
that this is also true for Salem numbers of degree six, but not for higher degrees.

Following Frougny and Solomyak~\cite{FS92}, we say that an algebraic number $\beta>1$ satisfies the finiteness property (F)
if for each $x \in \mathbb{Z}[\beta^{-1}] \cap [0,1)$ there exists a positive integer $n$ such that $T^n(x)=0$.
Property (F) can only hold for Pisot numbers whose associated shift space is of finite type.
The quadratic Pisot numbers that satisfy (F) are completely described in \cite{FS92}, but from the cubic case on a full
characterisation seems to be very hard. Partial results, conditions and algorithms can be found, among other references, in
\cite{Akiyama:98,srs1,srs2,Akiyama-Rao-Steiner:04,FS92,Surer:07}.
Moreover, beta-expansions have been studied in the context of formal Laurent series over finite fields.
In this context the conjectures of Schmidt and Boyd have been proved (\emph{cf.}~\cite{hbaibmkaouar:06,Scheicher:07}).

In the present article, we deal with beta-expansions in the ring of the $p$-adic integers.
We characterise the set of numbers with eventually periodic and finite expansions.
In particular in Theorem~\ref{bertrandschmidt}, we prove that for $\beta$ a Pisot-Chabauty number,
the set of eventually periodic beta-expansions is $\Q(\beta)\cap\Z_p$.
This is the equivalent of the result of Bertrand and Schmidt in the context of $p$-adic numbers.
Furthermore, in Theorem~\ref{conversebertrandschmidt}, we prove an equivalent result to Schmidt's partial converse.
In Theorem~\ref{thmfin}, we characterise the set of finite beta-expansions for a family of Pisot-Chabouty numbers.
The theory of beta-expansions of $p$-adic numbers involves techniques from the theory of
beta-expansions of real numbers as well as formal Laurent series.

The article is organised as follows:
in Section 2, we introduce some basic definitions and preliminary results.
In Section 3, we define the beta-expansions for $p$-adic numbers.
In Section 4, we describe the periodic beta-expansions. The main result of this section
is Theorem~\ref{bertrandschmidt}.
In Section 5, we characterise the set finite expansions in Theorem~\ref{thmfin}.
\section{Basic definitions and results}
Let $p$ be a prime and
$$
\A_p:=\{m p^n:m,n\in\Z\}=\Z[\tfrac{1}{p}].
$$
Then $\A_p\subset\Q$ is a principal ring.
The unit group of $\A_p$ is
$
\{p^k:k\in\Z\}
$
and the field of fractions is $\Q$.
Define
$$
\nu_p:\A_p\to\Z\,\cup\{\infty\}
$$
by
$$
\nu_p(x)=
\begin{cases}
\mathrm{max}\{n\in\mathbb{Z}:p^n | x\},& \mbox{if}\quad x \neq 0;\\
\infty, & \mbox{if}\quad x=0.
\end{cases}
$$
Then $\nu_p$ verifies the properties
\begin{equation}
\label{expval}
\nu_p(0)  =\infty,\qquad
\nu_p(xy) =\nu_p(x)+\nu_p(y)
\end{equation}
and
\begin{equation}
\label{non-archimedean}
\begin{aligned}
\nu_p(x+y)&\geq\min\{\nu_p(x),\nu_p(y)\}\qmboxq{with}\\
\nu_p(x+y)&=\min\{\nu_p(x),\nu_p(y)\},\qmboxq{if}\nu_p(x)\ne\nu_p(y).
\end{aligned}
\end{equation}
Therefore $\nu_p(\cdot)$ is an exponential valuation on $\A_p$ (\emph{cf.}~\cite[Chapter~II]{Neukirch:92}).
The $p$-adic norm $|\cdot|_p$ on $\A_p$ is defined by
\begin{equation}
\label{pabs}
|x|_p:=
\begin{cases}
	p^{-\nu_p(x)}, & \text{if } x \neq 0;\\
	0,             & \text{if } x=0.
\end{cases}
\end{equation}
From \eqref{expval} follows
\begin{equation}
\label{nonarchimedean}
|xy|_p =|x|_p|y|_p
\end{equation}
and
\begin{equation}
\label{ultra}
\begin{aligned}
|x+y|_p&\leq\max\{|x|_p,|y|_p)\}\qmboxq{with}\\
|x+y|_p&=\max\{|x|_p,|y|_p)\},\qmboxq{if}|x|_p\ne|y|_p.
\end{aligned}
\end{equation}
Thus, $|\cdot|_p$ is a non-archimedean absolute value on $\A_p$.
Let $|\cdot|_\infty$ be the archimedean absolute value.
Then $|x|_p$ and $|x|_\infty$ satisfy the following product formula
$$
\prod_{p\in\P\,\cup\{\infty\}}|x|_p=1\qmboxq{for all}x\in\Q\setminus\{0\}
$$
where $\P$ denotes the set of primes.
The completion of
$\A_p$ with respect to $|\cdot|_p$ is the field $\Q_p$ of
$p$-adic numbers. Thus
$$
\Z\subset\A_p\subset\Q\subset\Q_p.
$$
Each element $x\in\Q_p$ admits a unique expansion of the form
\begin{equation}\label{padicseries}
x=\sum_{n=n_0}^\infty x_{n}p^{n},\qmboxq{such that}n_0\in\Z,\quad x_{n_0}\ne0\qmboxq{and}x_{n}\in\{0,\ldots,p-1\}.
\end{equation}
For expansions of the form \eqref{padicseries}, we will use the notation
$$
x=\cdots p_2 p_1 p_0 \decpoint p_{-1}\cdots p_{n_0}.
$$
The $x_i$ are called the $p$-adic coefficients of $x$.
If we define the extension $\nu_p:\Q_p\to\Z\cup\{\infty\}$ by
$$
\nu_p(x):=
\begin{cases}
n_0, & \text{if } x \neq 0;\\
\infty, & \text{if } x=0,
\end{cases}
$$
then \eqref{pabs} holds also in $\Q_p$.
The ring
$$
\Z_p:=\{x\in\Q_p:|x|_p\leq1\}
$$
is called the ring of $p$-adic integers.
It easily follows that
$$
\Z=\A_p\cap\Z_p=\{x\in \A_p:|x|_p\leq1\}.
$$ Furthermore $\Z_p$  is compact.
The elements of $\Z_p$ can be expressed uniquely in the form \eqref{padicseries} with $n_0\geq0$.
\begin{defn}
Each $x\in\Q_p$ of the form \eqref{padicseries} has a unique \emph{Artin decomposition}
$$
x=\lfloor x\rfloor_p+\{x\}_p
$$
with
$$
\lfloor x\rfloor_p:=\sum_{n\geq0} x_{n}p^{n}\qmboxq{and}
\{x\}_p:=\sum_{n<0}x_{n}p^{n}.
$$
The number
$\lfloor x\rfloor_p\in\Z_p$ is called \emph{$p$-adic integer part} and
$\{x\}_p\in \A_p\cap[0,1)$ is called \emph{$p$-adic fractional part} of $x$.
\end{defn}
\begin{rem}
\label{abspabsinf}
For $x\in\R$,
let $\lfloor x \rfloor:=\max \{k\in\Z : k\leq x\}$ be the real floor function.
If $x\in \A_p$, it follows that
$\lfloor x\rfloor_p=\lfloor x\rfloor.$ However, for $x\in\Q\setminus \A_p$, this identity generally is not true.
In any case, for $x\in\R\setminus\Q$ or $x\in\Q_p\setminus\Q$, one of these functions is not defined and therefore,
this identity does not hold.
\end{rem}
\begin{defn}
An element $\alpha$ is called \emph{algebraic over $\A_p$}, if
there is a polynomial
\begin{equation}
\label{pxy}
f(x)=a_0+ a_1x+\cdots+ a_n x^n\in \A_p[x]\qmboxq{with} f (\alpha)=0.
\end{equation}
If $f$ is irreducible over $\A_p$, then $f$ is called a minimal polynomial of $\alpha$.
If $a_n=p^k$ for some $k\in\Z$, then $\alpha$ is called an \emph{algebraic integer}.
Since $p^k$ is a unit of $\A_p$, we can assume without loss of generality, that $a_n=1$.
\end{defn}
It turns out that algebraic elements over $\A_p$ are not necessarily contained in $\Q_p$.
In our context, we will only need that $|\cdot|_p$ and $v_p(\cdot)$ can be extended uniquely from $\Q_p$ to all of
its algebraic extensions. This follows from the next
\begin{thm}\label{ext}
\cite[Chapter II,~Theorem~4.8]{Neukirch:92}. Let $K$ be a field which is complete
with respect to $|\cdot|$ and $L/K$ be an algebraic extension of
degree $m$. Then $|\cdot|$ has a unique extension to $L$ defined by
$$
|\alpha|=\sqrt[m]{|N_{L/K}(\alpha)|},
$$
and $L$ is complete with respect to this extension.
\end{thm}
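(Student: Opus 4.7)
The plan is to follow the classical three-step structure for this extension theorem: first reduce to the case where $[L:K]=m$ is finite (any $\alpha$ in an algebraic extension lies in the finite subextension $K(\alpha)$, and the definition only involves $N_{L/K}$, so it is enough to treat finite $L/K$); then prove uniqueness, existence, and completeness in turn.

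For uniqueness (and simultaneously completeness), I would invoke the standard fact that all norms on a finite-dimensional vector space over a complete valued field are equivalent. Any extension of $|\cdot|$ from $K$ to $L$ is in particular a $K$-vector space norm on $L$, hence equivalent to the sup-norm relative to a fixed basis $e_1,\dots,e_m$. Two such extensions are therefore equivalent as norms on $L$; since they are multiplicative absolute values that agree on $K$, this forces them to coincide. The same equivalence with the sup-norm transports completeness from $K$ to $L$.

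For existence I would set $|\alpha|:=\sqrt[m]{|N_{L/K}(\alpha)|}$ and check the axioms. Positivity, non-degeneracy, and multiplicativity are immediate from the corresponding properties of the field norm $N_{L/K}$, and the restriction to $K$ gives back $|\cdot|$ because $N_{L/K}(a)=a^m$ for $a\in K$. The main obstacle is the ultrametric inequality $|\alpha+\beta|\leq\max\{|\alpha|,|\beta|\}$ in the non-archimedean setting relevant to this paper. After dividing by the larger summand, this reduces to showing that the set $\mathcal{O}_L:=\{\alpha\in L\colon|\alpha|\leq 1\}$ is closed under addition. I would prove this by characterising $\mathcal{O}_L$ as the integral closure of $\mathcal{O}_K:=\{a\in K\colon|a|\leq 1\}$ in $L$: a Newton-polygon argument, together with the fact that the minimal polynomial of $\alpha$ is a factor in $K[x]$ of $\alpha^m$ plus lower-order terms, shows that $\alpha\in\mathcal{O}_L$ if and only if the constant term of its minimal polynomial lies in $\mathcal{O}_K$, and (equivalently) if and only if all coefficients of the minimal polynomial lie in $\mathcal{O}_K$. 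Since integral closures in a ring are themselves rings, $\mathcal{O}_L$ is closed under addition, yielding the ultrametric inequality.

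The hard part is thus the non-archimedean triangle inequality, which hinges on the non-trivial claim that $\alpha\in\mathcal{O}_L$ iff $\alpha$ is integral over $\mathcal{O}_K$; the Newton-polygon step needs the completeness of $K$ in an essential way (through Hensel's lemma or an equivalent factorisation result). Everything else — multiplicativity, the restriction back to $K$, uniqueness, and completeness of $L$ — is then a direct consequence of general valuation-theoretic machinery that does not require new work in the $p$-adic setting.
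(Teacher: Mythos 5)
The paper does not actually prove this statement: it is quoted verbatim from Neukirch \cite[Chapter~II, Theorem~4.8]{Neukirch:92} and used as a black box, so there is no internal proof to compare against. Your sketch is, in outline, the standard textbook argument (essentially Neukirch's own, with the uniqueness and completeness parts handled by the equally standard norm-equivalence route found in Cassels or Lang). Existence: you identify $\{\alpha\in L:|N_{L/K}(\alpha)|\leq 1\}$ with the integral closure of $\mathcal{O}_K$ in $L$, whose ring structure yields the ultrametric inequality; the nontrivial inclusion (constant term integral implies all coefficients integral) is exactly the single-segment Newton polygon of an irreducible polynomial over a complete field, i.e.\ a consequence of Proposition~\ref{neukirchprop63}(ii), and you correctly locate the essential use of completeness there. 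Uniqueness and completeness of $L$ then follow from the equivalence of all norms on a finite-dimensional vector space over a complete valued field. Two small caveats: that equivalence theorem, and the step ``two equivalent absolute values agreeing on $K$ coincide,'' need $|\cdot|$ to be nontrivial on $K$ (harmless here, since the paper only applies the theorem over $K=\Q_p$); and your triangle-inequality argument covers only the non-archimedean case, whereas the theorem as stated admits archimedean $K$, where one instead invokes Ostrowski's theorem. Neither caveat affects the use made of the theorem in this paper, so your proposal is an acceptable reconstruction of the cited result.
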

We apply Theorem \ref{ext} to algebraic extensions of
$\Q_p$. Since $\Q_p$ is complete, $|\cdot|_p$ and $v_p(\cdot)$ can be extended uniquely
to each algebraic extension field $L$ of $K=\Q_p$. Thus, every
algebraic element over $\A_p$ can be valuated.

\smallskip
Let $\alpha$ be algebraic over $\A_p$ and
\eqref{pxy} be its minimal polynomial.
The \emph{Newton polygon of $f$} (\emph{cf.}~\cite[Chapter~II]{Neukirch:92}) is defined as the lower convex
hull of the set of points
$$
\{(0,v_p(a_0)),\ldots,(m,v_p(a_m))\}.
$$
The polygon is a sequence of line segments $E_1,E_2,\ldots,E_r$ with
monotically increasing slopes.
\begin{prop}\label{neukirchprop63}
Let
$$
f(x)=a_0+\cdots+a_n x^n,\quad a_0 a_n\ne0,
$$
be a polynomial over the field $K$,
and $K$ be complete with respect to the exponential valuation $v$. Let $w$
be the unique extension of $v$ to the splitting field $L$ of $f$.
\begin{enumerate}
\renewcommand{\labelenumi}{(\roman{enumi})}
\item
If $$\overline{(r,v(a_r))(s,v(a_s))}$$ is a line segment of slope $-m$ occurring in the Newton polygon of $f$, then
$f(x)$
has exactly $s-r$ roots $\alpha_1,\ldots,\alpha_{s-r}$ of value
$$
w(\alpha_1)=\cdots=w(\alpha_{s-r})=m.
$$
\item
Let $E_1,\ldots,E_t$ be the line segments of the Newton polygon. Then, for each $E_j$, there exists a
unique polynomial $g_j(x)\in K[x]$, such that
$$
f(x)=a_n\prod_{j=1}^t g_j(x).
$$
Thus,
$$
g_j(x)=\prod_{w(\alpha_i)=m_j}(x-\alpha_i).
$$
\end{enumerate}
\end{prop}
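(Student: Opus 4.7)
The plan is to pass to the splitting field $L$ of $f$ over $K$, invoke Theorem~\ref{ext} to get the unique valuation extension $w$, and then read off the Newton-polygon data directly from the factorisation $f(x) = a_n \prod_{i=1}^n(x-\alpha_i)$ in $L[x]$. Order the roots so that $w(\alpha_1) \leq w(\alpha_2) \leq \cdots \leq w(\alpha_n)$, and let $m_1 < m_2 < \cdots < m_t$ be the distinct values with multiplicities $n_1, \ldots, n_t$ (so $n_1+\cdots+n_t = n$).

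For part (i) the core computation is Vieta: $a_{n-k}/a_n = (-1)^k e_k(\alpha_1,\ldots,\alpha_n)$, where $e_k$ is the $k$-th elementary symmetric polynomial. Applying the strong triangle inequality \eqref{non-archimedean} to $e_k$, and noting that the minimum of $\sum_j w(\alpha_{i_j})$ over $k$-subsets equals $w(\alpha_1)+\cdots+w(\alpha_k)$, I obtain
$$v(a_{n-k}) \geq v(a_n) + w(\alpha_1) + \cdots + w(\alpha_k),$$
with \emph{equality} whenever $w(\alpha_k) < w(\alpha_{k+1})$, because the minimising subset $\{1,\ldots,k\}$ is then unique. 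These equality indices are precisely $k = n_1 + n_2 + \cdots + n_j$ for $j = 0,1,\ldots,t$. A quick check shows that the corresponding points $(n-k, v(a_{n-k}))$ already lie on the lower convex hull, while all intermediate points sit at or above it; the segment joining two consecutive equality points has horizontal length $n_j$ and slope $-m_j$. Hence a segment of slope $-m$ from $(r,v(a_r))$ to $(s,v(a_s))$ satisfies $s-r = \#\{i : w(\alpha_i)=m\}$, proving (i).

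For part (ii), set $g_j(x) := \prod_{w(\alpha_i)=m_j}(x-\alpha_i) \in L[x]$; by construction $f = a_n \prod_{j=1}^t g_j$, and uniqueness is clear from this explicit formula. The only thing to verify is $g_j \in K[x]$. This is Galois invariance: any $K$-embedding of $L$ into an algebraic closure permutes the roots of $f$, and by the \emph{uniqueness} clause of Theorem~\ref{ext} it preserves $w$, hence permutes the roots inside each valuation class. Therefore each $g_j$ is Galois-stable and descends to $K[x]$.

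The step I expect to require the most care is the equality case in part (i): one must argue that at the indices $k = n_1+\cdots+n_j$ the minimising subset of $e_k$ is truly unique so that the strong ultrametric inequality delivers equality rather than a possibly strict bound, and then combine this with the inequality at all other indices to identify the lower convex hull exactly. The factorisation in (ii) is conceptually cleaner once the Galois argument is in place.
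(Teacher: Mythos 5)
The paper does not prove this proposition: it is quoted verbatim from Neukirch (it is Proposition~6.3 of Chapter~II of \cite{Neukirch:92}, as the label suggests), so there is no in-paper argument to compare against. Your proof is the standard textbook one and is sound. Part~(i) is exactly Vieta plus the ultrametric inequality; the ``quick check'' you defer is worth one line, namely that the function $k\mapsto w(\alpha_1)+\cdots+w(\alpha_k)$ has nondecreasing increments $w(\alpha_k)$, so the piecewise-linear curve through the equality points $(n-k,v(a_n)+\sum_{i\le k}w(\alpha_i))$, $k=n_1+\cdots+n_j$, is convex and minorizes all the points $(n-k,v(a_{n-k}))$; this identifies it with the lower convex hull and gives both the slopes $-m_j$ and the horizontal lengths $n_j$. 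The only place a careful reader could object is the descent step in~(ii): the conclusion ``$g_j$ is Galois-stable, hence lies in $K[x]$'' uses $L^{\mathrm{Aut}(L/K)}=K$, which requires $L/K$ to be separable as well as normal; for a general complete field of positive characteristic with $f$ inseparable, the fixed field can be a proper purely inseparable extension of $K$, and one then has to argue separately (e.g.\ via Hensel-type lifting, which is how the slope factorisation is usually proved in full generality). Since the paper applies the proposition only with $K=\Q_p$, where separability is automatic, your argument covers everything that is actually used.
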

\begin{rem}
\label{enumeration}
In this article, we will use the convention that, for algebraic elements $\alpha$ over $\A_p$, we will denote by
$\alpha_1,\ldots,\alpha_n$ the non-archimedean conjugates  and by $\alpha_{n+1},\ldots,\alpha_{2n}$
the archimedean conjugates of $\alpha$.
\end{rem}
\begin{lem}
\label{finitenesslem}
Let $A\subset \A_p$.
If $A$ is bounded with respect to $|\cdot|_p$ and $|\cdot|_\infty$, i.e.
$$
\max_{a\in A}|a|_p<\infty\qmboxq{and}
\max_{a\in A}|a|_\infty<\infty,
$$
then $A$ is finite.
\end{lem}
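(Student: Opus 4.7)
The plan is to use the bound on $|\cdot|_p$ to clear denominators, reducing the problem to the elementary fact that a bounded subset of $\Z$ is finite. The key ingredient, already recorded in the excerpt, is the identity $\Z = \A_p \cap \Z_p$, together with the correspondence $|x|_p \leq p^N \iff \nu_p(x) \geq -N$.

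Concretely, I would first fix constants $B_p$ and $B_\infty$ with $|a|_p \leq B_p$ and $|a|_\infty \leq B_\infty$ for every $a \in A$, and then choose an integer $N \geq 0$ so large that $p^N \geq B_p$. For every $a \in A$ this gives $\nu_p(a) \geq -N$, equivalently $\nu_p(p^N a) \geq 0$, i.e.\ $p^N a \in \Z_p$. Because multiplication by $p^N$ preserves $\A_p$, we also have $p^N a \in \A_p$, and the identity $\Z = \A_p \cap \Z_p$ then yields $p^N a \in \Z$ for every $a \in A$.

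Next, the archimedean bound gives $|p^N a|_\infty \leq p^N B_\infty$, so the image $p^N A$ is a subset of the finite set $\{n \in \Z : |n|_\infty \leq p^N B_\infty\}$. Since $p^N \neq 0$, the map $a \mapsto p^N a$ is injective on $\A_p$, hence $A$ itself is finite.

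There is no real obstacle here: all ingredients (the definition of $\A_p$ and $\nu_p$, the formula \eqref{pabs}, and the identity $\Z = \A_p \cap \Z_p$) have already been established in the excerpt. The only care needed is translating the hypothesis $\max_{a\in A} |a|_p < \infty$ into an integer lower bound on $\nu_p$, which amounts to nothing more than taking $N = \lceil \log_p B_p \rceil$.
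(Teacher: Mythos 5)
Your proof is correct and follows essentially the same route as the paper: both arguments use the $p$-adic bound to clear denominators (placing $p^N A$, respectively $p^{-k}A$, inside $\Z$) and then invoke the archimedean bound to conclude that only finitely many integers can occur. Your write-up is in fact slightly more careful in spelling out the use of $\Z = \A_p \cap \Z_p$ and the injectivity of multiplication by $p^N$.
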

\begin{proof}
If $|a|_p\leq K$ for all $a\in A$, then
$$
A\subset\left\{mp^k: m\in\Z,k=\left\lfloor-\tfrac{\log K}{\log p}\right\rfloor\right\}.
$$
Therefore, if $A$ is bounded with respect to $|\cdot|_\infty$, it can contain only
finitely many points.
\end{proof}
\begin{defn}\label{PisotChabauty}
A Pisot-Chabauty number (for short PC number) is a $p$-adic number $\alpha\in\Q_p$,  such that
\begin{enumerate}
\renewcommand{\labelenumi}{(\roman{enumi})}
\item \ $\alpha_1:=\alpha$ is an algebraic integer over $\A_p$.
\item $|\alpha_1|_p\ >1$ for one non-archimedean conjugate of $\alpha$.
\item $|\alpha_i|_p\ \leq1$ for all non-archimedean conjugates $\alpha_i$, $i\in\{2,\ldots,n\}$  of $\alpha$.
\item $|\alpha_i|_\infty<1$ for all archimedean conjugates $\alpha_i$, $i\in\{n+1,\ldots,2n\}$ of $\alpha$.
\end{enumerate}

\smallskip
\noindent
If condition (iv) is replaced by
\begin{itemize}
\item[(iv)'] $|\alpha_i|_\infty=1$ for all archimedean conjugates $\alpha_i$, $i\in\{n+1,\ldots,2n\}$ of $\alpha$,
\end{itemize}
then $\alpha$ is called a Salem-Chabauty number (for short SC number).
\end{defn}
\begin{rem}
\label{remsalem}
If an archimedean root is located on the complex unit circle, then
its minimal polynomial $f$ is self-reciprocal, i.e. $f(x)=x^n f(\tfrac1x)$.
If there does not exist any root outside the unit circle, there also can not exist
any root inside the unit circle.
Therefore, condition (iv)' is formulated with equality for all archimedean conjugates of $\alpha$.
\end{rem}
\begin{prop}\label{corconj}
Let $\alpha$ be a PC number or SC number. Then $\alpha\in\Q_p$.
\end{prop}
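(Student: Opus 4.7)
The plan is to apply the Newton polygon decomposition (Proposition~\ref{neukirchprop63}) to the monic minimal polynomial $f(x)\in\A_p[x]\subset\Q_p[x]$ of $\alpha$. Since $\Q_p$ is complete with respect to $v_p$ and since $v_p$ extends uniquely to the splitting field of $f$ by Theorem~\ref{ext}, all the hypotheses of Proposition~\ref{neukirchprop63} are in force, and the $p$-adic valuations of the roots of $f$ are encoded in the slopes of its Newton polygon.

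Conditions~(ii) and~(iii) of Definition~\ref{PisotChabauty} translate to $v_p(\alpha_1)<0$ and $v_p(\alpha_i)\geq 0$ for $i=2,\ldots,n$, so $\alpha_1$ is the unique non-archimedean conjugate of $\alpha$ having valuation $v_p(\alpha_1)$. By Proposition~\ref{neukirchprop63}(i) the segment of the Newton polygon of slope $-v_p(\alpha_1)>0$ therefore has length exactly $1$, and Proposition~\ref{neukirchprop63}(ii) then furnishes a divisor $g(x)\in\Q_p[x]$ of $f$ of degree $1$ whose only root is $\alpha_1$. Up to a unit, $g(x)=x-\alpha_1$, and consequently $\alpha_1\in\Q_p$.

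The only real obstacle is a bookkeeping check: one must confirm that the indexing convention of Remark~\ref{enumeration} really makes $\alpha_1,\ldots,\alpha_n$ exhaust the set of roots of $f$ in an algebraic closure of $\Q_p$, so that condition~(iii) genuinely forbids any further root from matching the valuation of $\alpha_1$. Note that neither condition~(iv) nor~(iv)$'$ enters the argument — the archimedean data are never consulted — so PC and SC numbers are covered by a single proof.
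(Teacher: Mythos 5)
Your proof is correct and follows essentially the same route as the paper: both arguments observe that conditions (ii) and (iii) make $\alpha$ the unique non-archimedean conjugate with negative valuation, so the Newton polygon has a segment of horizontal length one whose slope is $-\nu_p(\alpha)$, and Proposition~\ref{neukirchprop63} then yields the linear factor $x-\alpha\in\Q_p[x]$. Your explicit remark that the archimedean conditions (iv)/(iv)$'$ play no role matches the paper, which likewise treats PC and SC numbers uniformly here.
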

\begin{proof}
Let $\alpha$ and the minimal polynomial $f$ of $\alpha$
be of the form (\ref{pxy}).
Since there is no non-archimedean
conjugate $\alpha_j$, $j\in\{2,\ldots,n\}$ with $|\alpha|_p=|\alpha_j|_p$, the
Newton polygon of $f$ must contain an edge
$$\overline{(i,v_p(a_i))(i+1,v_p(a_{i+1}))},$$
with slope $\nu_p(\alpha)=v_p(a_{i+1})-v_p(a_{i})$
for some $i\in\{0,\ldots,n-1\}$.
By Proposition~\ref{neukirchprop63}~(i), the minimal polynomial must
contain the factor $x-\alpha\in\Q_p[x].$ Thus $\alpha\in\Q_p$.
\end{proof}
\begin{defn}
Let
$$
\mathcal{E}_n:=\{(r_1,\ldots,r_{n})\in\R^n:x^n+r_{n}x^{n-1}+\cdots+r_1\ \mbox{has only complex roots $\alpha$ with $|\alpha|<1$}\}.
$$
\end{defn}
Then $\mathcal{E}_n$ is an open set with $(0,\ldots,0)\in\mathcal{E}_n$.
By considering the Newton polygon
of the minimal polynomial, the following necessary conditions can be derived.
\begin{prop}\label{thmpisot}
Let $\alpha$ be an algebraic integer over $\A_p$ and
\eqref{pxy}
be its minimal polynomial with $a_n=1$.
Then
$\alpha$ is a PC number if and only if
\begin{equation}
\label{pcsc}
\nu_p(a_{n-1})\leq\min_{0\leq j\leq n-2} \nu_p(a_j)\qmboxq{and}(a_0,\ldots,a_{n-1})\in\mathcal{E}_n.
\end{equation}
Moreover, $\alpha$ is a SC number if and only if
\begin{equation}
\nu_p(a_{n-1})\leq\min_{0\leq j\leq n-2} \nu_p(a_j)\qmboxq{and}(a_0,\ldots,a_{n-1})\in\partial{\mathcal{E}}_n.
\end{equation}
In both cases, $\alpha$ is an isolated root with $\alpha\in\Q_p$ and $|\alpha|_p=|a_{n-1}|_p$.
\end{prop}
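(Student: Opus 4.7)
My plan is to prove both equivalences simultaneously, since they share the same non-archimedean content and differ only in whether the coefficient vector lies in $\mathcal{E}_n$ or $\partial\mathcal{E}_n$. The main tools will be the Newton polygon machinery of Proposition~\ref{neukirchprop63} combined with Vieta's formulas, with Remark~\ref{remsalem} converting $\partial\mathcal{E}_n$ into the statement that all archimedean conjugates lie on the unit circle.

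For the forward direction, I factor the minimal polynomial as $f(x)=\prod_{i=1}^n(x-\alpha_i)$ over $\overline{\Q_p}$, so that $a_{n-k}=(-1)^k e_k(\alpha_1,\ldots,\alpha_n)$. The PC/SC bounds $|\alpha_1|_p>1\geq|\alpha_i|_p$ for $i\geq 2$, combined with the sharp ultrametric identity in~\eqref{ultra}, give $|a_{n-1}|_p=|\alpha_1|_p$; for $k\geq 2$ each monomial of $e_k$ contains at most one copy of $\alpha_1$ and hence has $p$-adic norm at most $|\alpha_1|_p=|a_{n-1}|_p$, yielding $\nu_p(a_{n-1})\leq\nu_p(a_{n-k})$. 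The archimedean half is a direct translation of condition~(iv) or~(iv)'.

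For the converse, the key preliminary step is to promote the hypothesis to the strict bound $\nu_p(a_{n-1})<0$. In the PC case I argue by contradiction: if $\nu_p(a_{n-1})\geq 0$, then~\eqref{pcsc} forces every $a_j\in\A_p\cap\Z_p=\Z$, while $(a_0,\ldots,a_{n-1})\in\mathcal{E}_n$ forces $|a_0|_\infty=\prod_{j=n+1}^{2n}|\alpha_j|_\infty<1$, contradicting $a_0\in\Z\setminus\{0\}$. For SC, the analogous integrality combined with all archimedean roots on the unit circle would make $\alpha$ a root of unity by Kronecker's theorem, incompatible with $|\alpha_1|_p>1$. Once $\nu_p(a_{n-1})<0$ is secured, the hypothesis places $(n-1,\nu_p(a_{n-1}))$ strictly below each preceding lattice point $(j,\nu_p(a_j))$, so the Newton polygon of $f$ ends with the edge $\overline{(n-1,\nu_p(a_{n-1}))(n,0)}$ of horizontal length one and positive slope $-\nu_p(a_{n-1})$, while monotonicity of slopes forces every earlier edge to have non-positive slope. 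Proposition~\ref{neukirchprop63}(i) then produces a unique non-archimedean root $\alpha_1$ with $\nu_p(\alpha_1)=\nu_p(a_{n-1})$ and $n-1$ further non-archimedean roots of non-negative valuation; this yields conditions~(ii), (iii), the equality $|\alpha|_p=|a_{n-1}|_p$, and the isolation of $\alpha$. Condition~(iv) or~(iv)' is the verbatim content of the archimedean hypothesis, and $\alpha\in\Q_p$ is supplied by Proposition~\ref{corconj}.

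The main obstacle I foresee is the clean extraction of $\nu_p(a_{n-1})<0$ in the converse: the Newton polygon machinery only distinguishes signs of slopes, so without this strict bound I cannot isolate a single root of $p$-adic norm exceeding one. The PC case is handled transparently via the $\mathcal{E}_n$-plus-integrality argument, but the SC case is more delicate and genuinely needs Kronecker's theorem (or an equivalent rigidity statement) to exclude cyclotomic-type behaviour.
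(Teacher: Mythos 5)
Your forward direction (Vieta's formulas plus the sharp case of the ultrametric inequality) and your PC converse are correct, and they supply essentially all the substance that the paper's one-line proof omits; the underlying tool, the Newton polygon of the minimal polynomial, is the same one the paper invokes. The only loose end on the PC side is the degenerate case $a_0=0$, i.e.\ $\alpha=0$: your integrality argument concludes ``contradicting $a_0\in\Z\setminus\{0\}$,'' which presupposes $\alpha\neq 0$, but this is harmless since $\alpha=0$ is trivially not a PC number.

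The genuine gap is in the SC converse, at exactly the point you yourself flag as delicate. After Kronecker's theorem tells you that $\alpha$ is a root of unity, you declare this ``incompatible with $|\alpha_1|_p>1$'' --- but $|\alpha_1|_p>1$ is condition (ii) of Definition~\ref{PisotChabauty}, i.e.\ precisely the conclusion you are in the middle of proving, so you may not assume it. And there is no contradiction to be extracted by other means, because the hypotheses of the SC equivalence really are satisfied by roots of unity. Concretely, take $f(x)=x^2+x+1$, so $a_0=a_1=a_2=1$. Then $\nu_p(a_1)=0\leq\nu_p(a_0)$, and $(a_0,a_1)=(1,1)$ lies in $\partial\mathcal{E}_2$ since both roots of $f$ sit on the unit circle (so the point is in $\overline{\mathcal{E}}_2$ but not in the open set $\mathcal{E}_2$). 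Yet a primitive cube root of unity has all non-archimedean conjugates of $p$-adic absolute value $1$, hence is not an SC number; for $p=2$ it does not even lie in $\Q_2$, so the final claims $\alpha\in\Q_p$ and $|\alpha|_p=|a_{n-1}|_p>1$ fail as well. In other words, the SC ``if'' direction is false as stated, and no argument can close your gap without adding the hypothesis $\nu_p(a_{n-1})<0$ (equivalently $|a_{n-1}|_p>1$), which is exactly the strict bound your Newton-polygon step needs and cannot derive. Your attempt has in effect located an error in the proposition rather than a repairable defect in your own reasoning: the extra hypothesis is automatic in the PC case (by your integrality argument), which is why only the SC half breaks, and the paper's one-line proof passes over the issue entirely.
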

\begin{proof}
The Proposition follows directly from Proposition~\ref{neukirchprop63}~(ii) and Definition~\ref{PisotChabauty}.
\qedhere
\end{proof}
\begin{expl}
\label{xpl}
The PC numbers of degree one admit the form
$\{x\in \A_p:|x|_p>1\}.$
In order to obtain PC numbers of arbitrary degree,
the following construction may be used.
We consider an irreducible polynomial
$$
f(x):=p^k x^n+a_{n-1}x^{n-1}+\cdots+a_{1}x+a_0\in\Z[x]
$$
such that
$$
\nu_p(a_{n-1})\leq\min_{0\leq j\leq n-2} \nu_p(a_j)
$$
and $k$ is large enough such that
$$
\left(\frac{a_{n-1}}{p^k},\ldots,\frac{a_0}{p^k}\right)\in\mathcal{E}_n.
$$
Such a $k$ exists, since $(0,\ldots,0)$ is an inner point of $\mathcal{E}_n$.
Then the archimedean roots of $f$ fulfill $|\alpha_i|_\infty<1$.
Since $p^k$ is a unit of $\A_p$, the roots of $f$ are algebraic integers over $\A_p$.

By considering the Newton polygon of $f$,
we confirm that $f$ has
one non-archimedean root $\alpha_1$ with $|\alpha_1|_p>1$
and $n-1$ non-archimedean roots $\alpha_i$ with $|\alpha_i|_p\leq1$.
Therefore, $\alpha_1$ is a PC number.
\end{expl}
\begin{rem}
In the context of the present article, it turns out to be more convenient to write the minimal polynomial
of PC numbers or SC numbers in the form
\begin{equation}
\label{minpol}
x^n-a_1 x^{n-1}-\cdots-a_n,\quad a_i\in \A_p[x].
\end{equation}
From now on, we will use this notation for the rest of the article.
\end{rem}
In Theorem~\ref{thmrecurrence}, a method to compute
the $p$-adic coefficients of PC or SC numbers is given. For the proof, we will need the
following auxiliary result.
\begin{lem}\label{simplefact}
If $z,w\in\Q_p$ with $|z|_p=|w|_p$, then
$|z^n-w^n|_p\leq|z-w|_p|z|_p^{n-1}$ for all $n\in\Z$.
\end{lem}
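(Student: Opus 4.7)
The plan is to reduce everything to the case $n\geq 1$ via the standard telescoping factorisation
$$z^n - w^n = (z-w)\bigl(z^{n-1} + z^{n-2}w + \cdots + zw^{n-2} + w^{n-1}\bigr),$$
and then invoke the ultrametric inequality \eqref{ultra}. Concretely, the hypothesis $|z|_p=|w|_p$ together with the multiplicativity \eqref{nonarchimedean} implies that each of the $n$ summands $z^{n-1-k}w^k$ in the second factor has $p$-adic absolute value exactly $|z|_p^{n-1}$. Hence the ultrametric estimate bounds the whole sum by $|z|_p^{n-1}$, and multiplying by $|z-w|_p$ yields the desired inequality. The degenerate case $n=0$ is trivial because the left hand side is $0$.

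For negative exponents, the plan is to reduce to the positive case by inversion. Writing $n=-m$ with $m\geq 1$ and using
$$z^{n} - w^{n} = \frac{w^m - z^m}{z^m w^m},$$
one applies \eqref{nonarchimedean} and the case already handled to obtain
$$|z^n - w^n|_p = \frac{|z^m - w^m|_p}{|z|_p^m\,|w|_p^m} \leq \frac{|z-w|_p\,|z|_p^{m-1}}{|z|_p^{2m}} = |z-w|_p\,|z|_p^{-m-1} = |z-w|_p\,|z|_p^{n-1},$$
where in the first equality the hypothesis $|z|_p=|w|_p$ is used once more.

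I do not expect any serious obstacle: the argument is purely formal, combining an elementary polynomial identity with the non-archimedean triangle inequality and multiplicativity of $|\cdot|_p$. The only point requiring a bit of care is the bookkeeping of exponents in the negative case, where one must check that $|z|_p^{m-1}/|z|_p^{2m}$ collapses correctly to $|z|_p^{n-1}$ after the substitution $n=-m$.
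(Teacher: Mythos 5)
Your proof is correct and follows essentially the same route as the paper's: the telescoping factorisation with the ultrametric bound for $n\geq 1$, and the reduction of negative exponents to the positive case via $z^{-m}-w^{-m}=(w^m-z^m)z^{-m}w^{-m}$ together with multiplicativity. The exponent bookkeeping in the negative case is also correct.
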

\begin{proof}
The statement is trivial for $n=0$. For $n>0$, we have
\begin{align*}
|z^n-w^n|_p&=|z-w|_p|z^{n-1}+z^{n-2}w+\cdots+w^{n-1}|_p\\
&\leq|z-w|_p
\max_{0\leq j\leq n-1}|z^{n-1-j}w^j|_p\\
&=|z-w|_p|z|^{n-1}_p
\end{align*}
and
\belowdisplayskip=-12pt
\begin{align*}
|z^{-n}-w^{-n}|_p&=|{w^n-z^n}|_p|{z^{-n}w^{-n}}|_p\\
&\leq|w-z|_p|w|_p^{n-1}|z^{-n}w^{-n}|_p\\
&=|z-w|_p|z|_p^{-n-1}.
\end{align*}
\end{proof}
\begin{thm}\label{thmrecurrence}
Let $\alpha$ be a PC number or SC number and
\eqref{minpol}
be its minimal polynomial. Then the recurrence
\begin{equation}
\label{recurrence}
\begin{aligned}
\alpha_0&:=a_1,\\
\alpha_{k+1}&:=a_1+\frac{a_2}{\alpha_k}+\cdots+\frac{a_n}{\alpha_k^{n-1}}\quad\mbox{for }k\geq1
\end{aligned}
\end{equation}
converges to
$$
\lim_{k\to\infty}\alpha_k=\alpha.
$$
\end{thm}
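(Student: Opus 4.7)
The plan is to recognize $\alpha$ as a fixed point of the map
$$F(x) := a_1 + \frac{a_2}{x} + \cdots + \frac{a_n}{x^{n-1}},$$
since the minimal polynomial \eqref{minpol} gives $\alpha^n = a_1 \alpha^{n-1} + \cdots + a_n$, i.e.\ $F(\alpha) = \alpha$. The iteration is then precisely $\alpha_{k+1} = F(\alpha_k)$, so it suffices to exhibit $F$ as a $p$-adic contraction near the orbit.

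The first step is to extract the valuations we need from Proposition~\ref{thmpisot}. Translating the condition $\nu_p(a_{n-1}^{\mathrm{old}}) \leq \min_{0 \leq j \leq n-2}\nu_p(a_j^{\mathrm{old}})$ into the notation of \eqref{minpol}, we obtain $|a_1|_p \geq |a_j|_p$ for all $j = 2, \ldots, n$, together with $|\alpha|_p = |a_1|_p > 1$. Using these inequalities and \eqref{ultra}, a straightforward induction on $k$ shows $|\alpha_k|_p = |a_1|_p$: indeed, for $j \geq 2$ one has $|a_j/\alpha_k^{j-1}|_p \leq |a_1|_p^{2-j} \leq 1 < |a_1|_p$, so the first summand $a_1$ strictly dominates and the ultrametric equality in \eqref{ultra} forces $|\alpha_{k+1}|_p = |a_1|_p$. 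In particular, $|\alpha_k|_p = |\alpha|_p$ throughout, which is exactly the hypothesis needed to invoke Lemma~\ref{simplefact}.

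The second step is the contraction estimate. Subtracting the fixed-point equation $\alpha = F(\alpha)$ from $\alpha_{k+1} = F(\alpha_k)$ yields
$$\alpha_{k+1} - \alpha = \sum_{j=2}^{n} a_j \left(\alpha_k^{-(j-1)} - \alpha^{-(j-1)}\right).$$
By Lemma~\ref{simplefact} applied with the (negative) exponent $-(j-1)$, each difference is bounded by $|\alpha_k - \alpha|_p \cdot |a_1|_p^{-j}$. The ultrametric inequality then gives
$$|\alpha_{k+1} - \alpha|_p \leq \max_{2 \leq j \leq n} |a_j|_p \, |a_1|_p^{-j} \, |\alpha_k - \alpha|_p \leq |a_1|_p^{-1} \, |\alpha_k - \alpha|_p,$$
where the final bound uses $|a_j|_p \leq |a_1|_p$ and that $j \geq 2$. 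Since $|a_1|_p^{-1} < 1$, iterating yields $|\alpha_k - \alpha|_p \leq |a_1|_p^{-k}|\alpha_0 - \alpha|_p \to 0$, proving convergence.

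The only delicate point is confirming that the geometric factor $|a_1|_p^{-1}$ is strictly less than $1$; this hinges on the PC/SC assumption $|\alpha|_p = |a_1|_p > 1$ coming from Proposition~\ref{thmpisot}, and is what distinguishes this $p$-adic setting from a naïve real analogue. All other steps are routine ultrametric bookkeeping.
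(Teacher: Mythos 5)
Your proof is correct, and the first half (the induction showing $|\alpha_k|_p=|a_1|_p$ via Proposition~\ref{thmpisot} and the ultrametric equality) is identical to the paper's. The second half takes a slightly but genuinely different route: the paper estimates the \emph{consecutive} differences, obtaining $|\alpha_{k+1}-\alpha_k|_p\leq C\,|\alpha_k-\alpha_{k-1}|_p$ with $C=\max_j |a_j|_p|a_1|_p^{-j}<1$, and then invokes Banach's fixed-point theorem: the sequence is Cauchy in the complete field $\Q_p$, so it converges to \emph{some} limit, which is then seen to satisfy the fixed-point equation and hence to be a root of the minimal polynomial (the unique one of $p$-adic absolute value $>1$, so it is $\alpha$). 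You instead estimate $|\alpha_{k+1}-\alpha|_p$ directly against the target root, getting geometric convergence to $\alpha$ itself. Your version is cleaner in that you never have to identify which root the limit is; but it silently uses that $\alpha$ lies in $\Q_p$ (or at least that Lemma~\ref{simplefact}, stated there for $z,w\in\Q_p$, applies with $w=\alpha$), i.e.\ it takes Proposition~\ref{corconj} as input. Since that proposition is proved earlier via Newton polygons, there is no circularity and your argument stands; the one thing it loses is the paper's subsequent remark that Theorem~\ref{thmrecurrence} yields an \emph{independent} proof of $\alpha\in\Q_p$ --- your variant cannot serve that purpose, because it assumes the conclusion of Proposition~\ref{corconj} from the start.
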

\begin{proof}
First we prove by induction that $|\alpha_k|_p=|a_1|_p$ for all $k\geq0$. For $k=0$ this assertion follows from
the definition.
Let $|\alpha_k|_p=|a_1|_p$.
For $j=2,\ldots,n$, it follows from Proposition~\ref{thmpisot} that $|a_1|_p>1$ and $|a_1|_p\geq|a_j|_p$. Using
\eqref{ultra}, we easily obtain
$$
\left|\frac{a_j}{\alpha_k^{j-1}}\right|_p
=\frac{\left|a_j\right|_p}{\left|\alpha_k\right|_p^{j-1}}
\leq\frac{|a_1|_p}{|a_1|_p}<|a_1|_p.
$$
Thus $|\alpha_{k+1}|_p=|a_1|_p$.
From Lemma~\ref{simplefact}, we get
\begin{eqnarray*}
|\alpha_{k+1}-\alpha_k|_p&=&
\left|
a_2
\left(
\frac{1}{\alpha_k}-\frac{1}{\alpha_{k-1}}
\right)
+\cdots+
a_n
\left(
\frac{1}{\alpha_k^{n-1}}-\frac{1}{\alpha_{k-1}^{n-1}}
\right)
\right|_p\\
&\leq&
\max\left(
\frac{|a_2|_p}{|a_1|_p^2},
\ldots,
\frac{|a_n|_p}{|a_1|_p^n}
\right)|\alpha_k-\alpha_{k-1}|_p.
\end{eqnarray*}
Since $|a_1|_p>1$ and $|a_1|_p\geq|a_j|_p$,
the left factor is constant and less than $1$.
By Banach's fixed-point theorem, the sequence converges to a limit $\alpha$
with
\belowdisplayskip=-12pt
$$
\alpha=a_1+\frac{a_2}{\alpha}+\cdots+\frac{a_n}{\alpha^{n-1}}.
$$
\end{proof}
\begin{rem}
Proposition~\ref{corconj} follows also as a direct consequence from Theorem~\ref{thmrecurrence}.
This gives an alternative proof of Proposition~\ref{corconj}.
\end{rem}
\begin{expl}
Let $f(x)=x^2+\tfrac12 x+\tfrac12$. Then $f$ has two non-archimedean roots $\beta_1,\beta_2$ with
$|\beta_1|_2=2$, $|\beta_2|_2=1$ and two archimedean roots $\beta_3,\beta_4$ with $|\beta_3|_\infty=|\beta_4|_\infty=\tfrac{1}{\sqrt2}$. Thus, the dominant non-archimedean root
$\beta:=\beta_1$ is a PC number.
By Theorem~\ref{thmrecurrence}, the recurrence \eqref{recurrence} converges to
$$
\beta=\cdots110100010010011100011000110110011100111111010010\decpoint1.
$$
\end{expl}
\section{beta-expansions of $p$-adic numbers}
Let $\beta\in\Q_p$ with $|\beta|_p>1$ and $\alpha\in\Z_p$. A {\em representation in base $\beta$ (or beta-representation)}
of $\alpha$ is an infinite sequence $(d_i)_{i\geq1}$, $d_j\in \A_p$ such that
\begin{equation}\label{betaexpansion}
\alpha=\sum_{i=1}^\infty\frac{d_i}{\beta^i}.
\end{equation}
A particular beta-representation -- called the {\em beta-expansion} -- can be computed
by the following.
This algorithm works as follows. Set $r^{(0)}=\alpha$ and let
$$
d_k=\{\beta r^{(k-1)}\}_p,\quad r^{(k)}=\lfloor\beta r^{(k-1)}\rfloor_p
$$
for $j\geq1$.
If
$$
\mathcal{N}=[0,1)\cap\{x:\in\A_p:|x|_p\leq|\beta|_p\},
$$
then $d_k\in\mathcal{N}$ for all $k\geq1$. The set $\mathcal{N}$ is called the digit set of the beta-expansion.
It is finite with $|\beta|_p$ elements.
Furthermore, $r^{(k)}\in\Z_p$ for all $k\geq1$.
The above procedure defines a mapping
$$d_\beta:\Z_p\rightarrow\mathcal{N}^{\,\N}$$ from $\Z_p$ to $\mathcal{N}^{\,\N}$, the set of one-sided infinite sequences over $\mathcal{N}$, by
\[
d_{\beta}(\alpha):=\decpoint d_1 d_2 \cdots.
\]
It follows that
\[
r^{(k)}=\beta^k \left(\alpha-\sum_{i=1}^k d_i \beta^{-i}\right) .
\]
An equivalent definition is obtained by using the beta-transformation $T:\Z_p\rightarrow\Z_p$,
which is given by the mapping
$z\mapsto\lfloor\beta z\rfloor_p$.
For $k\geq0$, define
$$
T^0(x):=x\qmboxq{and}T^k(x):=T(T^{k-1}(x)).
$$
Then $d_\beta(\alpha)=(d_k)_{k=1}^\infty$ if and only if $d_k=\{\beta T^{k-1}(\alpha)\}_p$
for all $k\geq1$.  If $s$ is the one sided shift defined on $\mathcal{N}^{\,\N}$, i.e.
$$
s(\decpoint d_1 d_2\cdots):=\decpoint d_2 d_3\cdots,
$$
we obtain the following commutative diagram:
\[
\xymatrix{
\Z_p\ar[r]^{T}\ar@<3pt>[d]_{d_\beta} & \Z_p\ar@<3pt>[d]^{d_\beta} \\
\mathcal{N}^{\,\N}\ar[r]_{s}         & \mathcal{N}^{\,\N}.
}
\]
By iteration, we obtain
$
d_\beta(T^k(\alpha))=s^k(d_\beta(\alpha))
$
for all $k\geq0$.
Now let $\alpha\in\Q_p$ with $|\alpha|_p>1$.
Then there is an integer $n>0$ such that $$|\beta|^{n-1}_p<|\alpha|_p\leq|\beta|^{n}_p.$$
If
\begin{align*}
d_\beta(\beta^{-n}\alpha)&=\decpoint d_{-n}\cdots d_{-1} d_0 d_1\cdots,
\intertext{we define}
d_\beta(\alpha)&=d_{-n}\cdots d_{-1} d_0 \decpoint d_1 d_2 \cdots.
\end{align*}
\begin{defn}
Let
\begin{align*}
\per(\beta)&:=\{\alpha\in\Z_p : d_\beta(\alpha)\mbox{ is eventually periodic}\}
\quad\mbox{and}\\
\fin(\beta)&:=\{\alpha\in\Z_p : d_\beta(\alpha)\mbox{ is finite}\}.
\end{align*}
\end{defn}
\noindent
Then, clearly
$$
\fin(\beta)\subset\per(\beta).
$$
\section{Periodic beta-expansions}
The following theorem provides a $p$-adic analogue of the famous Theorem of Bertrand and Schmidt
(\emph{cf.}~\cite{Bertrand:77,Schmidt:77}).
\begin{thm}\label{bertrandschmidt}
Let $\beta$ be a PC number. Then
$
\per(\beta)=\Q(\beta)\cap\Z_p.
$
\end{thm}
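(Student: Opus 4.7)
The plan is to prove the two inclusions separately. For $\per(\beta)\subseteq\Q(\beta)\cap\Z_p$, suppose $d_\beta(\alpha)=\decpoint d_1\cdots d_m\overline{d_{m+1}\cdots d_{m+k}}$ is eventually periodic. Since $|\beta^{-k}|_p<1$, summing the geometric series yields
$$
\alpha=\sum_{i=1}^m\frac{d_i}{\beta^i}+\frac{1}{\beta^m(1-\beta^{-k})}\sum_{j=1}^k\frac{d_{m+j}}{\beta^j},
$$
which lies in $\Q(\beta)$ since every $d_i\in\mathcal N\subset\A_p\subset\Q$; membership in $\Z_p$ is built into the definition of $\per(\beta)$.

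For the reverse inclusion, fix $\alpha\in\Q(\beta)\cap\Z_p$; the goal is to show the orbit $\{T^k(\alpha):k\geq 0\}$ is finite, which forces eventual periodicity of $d_\beta(\alpha)$ because $T$ is a function. Let $n$ be the degree of the minimal polynomial of $\beta$ over $\A_p$; Gauss's lemma (applied to a monic, hence primitive, polynomial in $\A_p[x]$) identifies $n=[\Q(\beta):\Q]$. Since $\A_p=\Z[\tfrac{1}{p}]$ absorbs all powers of $p$, I may write
$$
\alpha=\frac{1}{M}\sum_{j=0}^{n-1}e_j\beta^j,\qquad e_j\in\A_p,\ M\in\N\ \text{with}\ \gcd(M,p)=1.
$$
Because $\beta$ is integral over $\A_p$, the $\A_p$-module $\A_p[\beta]=\sum_{j=0}^{n-1}\A_p\beta^j$ is closed under multiplication by $\beta$; since each digit $d_k\in\mathcal N\subset\A_p$, induction on $T^k(\alpha)=\beta T^{k-1}(\alpha)-d_k$ preserves this shape, giving $T^k(\alpha)=\tfrac{1}{M}\sum_{j=0}^{n-1}e_j^{(k)}\beta^j$ with $e_j^{(k)}\in\A_p$ throughout the orbit.

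Next I bound every conjugate of $T^k(\alpha)$ uniformly in $k$ by iterating the identity $T^k(\alpha)=\beta^k\alpha-\sum_{i=1}^k d_i\beta^{k-i}$. For a non-archimedean embedding $\tau_s$ sending $\beta\mapsto\beta_s$ with $s\in\{2,\dots,n\}$, condition (iii) gives $|\beta_s|_p\leq 1$, so $|\beta_s^k\tau_s(\alpha)|_p\leq|\tau_s(\alpha)|_p$, and the ultrametric inequality bounds the digit sum by $\max_{d\in\mathcal N}|d|_p$; the case $s=1$ is covered by $|T^k(\alpha)|_p\leq 1$. For an archimedean embedding $\sigma_s$, condition (iv) gives $|\beta_{n+s}|_\infty<1$, so $|\beta_{n+s}^k\sigma_s(\alpha)|_\infty\to 0$ and the digit sum is bounded by $\max_{d\in\mathcal N}|d|_\infty/(1-|\beta_{n+s}|_\infty)$. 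Since the $\beta_1,\dots,\beta_n$ are distinct, the Vandermonde matrix $(\beta_s^j)_{1\leq s\leq n,\,0\leq j<n}$ is invertible, so inverting it (over $\bar\Q_p$ and over $\C$ respectively) expresses each $e_j^{(k)}$ as a fixed linear combination of the $\tau_s(T^k(\alpha))$, respectively the $\sigma_s(T^k(\alpha))$, and bounds $|e_j^{(k)}|_p$ and $|e_j^{(k)}|_\infty$ uniformly in $k$. With $e_j^{(k)}\in\A_p$ bounded in both absolute values, Lemma~\ref{finitenesslem} forces $(e_j^{(k)})_k$ to take only finitely many values, so the orbit is finite.

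The main obstacle is the simultaneous control of both absolute values: the archimedean smallness $|\beta_{n+s}|_\infty<1$ is essential for the $|\cdot|_\infty$-bound, the non-archimedean smallness $|\beta_s|_p\leq 1$ for $s\geq 2$ together with $T^k(\alpha)\in\Z_p$ is essential for the $|\cdot|_p$-bound, and the choice of $M$ coprime to $p$ (so that $|M|_p=1$) is what allows the coefficient-level bounds on the $e_j^{(k)}$ to follow cleanly from the conjugate-level bounds via Vandermonde inversion.
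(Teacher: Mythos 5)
Your proposal is correct and follows essentially the same route as the paper: bound the non-archimedean and archimedean conjugates of the orbit $T^k(\alpha)$ via $T^k(\alpha)=\beta^k\alpha-\sum_{i=1}^k d_i\beta^{k-i}$, transfer these bounds to $\A_p$-coordinates with a fixed denominator by inverting a Vandermonde-type matrix, and conclude finiteness of the orbit from Lemma~\ref{finitenesslem}. The only (immaterial) differences are that you use the power basis $\{1,\beta,\ldots,\beta^{n-1}\}$ where the paper uses $\{\beta^{-1},\ldots,\beta^{-n}\}$, and that you spell out the ``trivial'' inclusion $\per(\beta)\subseteq\Q(\beta)\cap\Z_p$ by summing the geometric series.
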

\begin{proof}
The proof for $\per(\beta)\subset\Q(\beta)\cap\Z_p$ is trivial.
Therefore, we only prove the opposite inclusion $\Q(\beta)\cap\Z_p\subset\per(\beta)$.

\smallskip
Let $z\in\Q(\beta)\cap\Z_p$.
First we prove that the orbit of $z$ under $T$ is bounded with respect to $|\cdot|_p$ and $|\cdot|_\infty$.
Let $r_1^{(0)}:=z$, $r_1^{(k)}:=T^k(z)$ and $r_j^{(k)}$ be the corresponding conjugates (with the numeration
defined as in Remark~\ref{enumeration}).
If $(d_k)_{k\geq1}$ is the beta-expansion of $r_1^{(0)}$, then
\begin{equation}\label{konjrest}
r^{(k)}_j=\beta_j^k
\left(
r^{(0)}_j-\sum_{i=1}^k d_i \beta_j^{-i}
\right)
\qmboxq{for all}j\in\{1,\ldots,2n\}.
\end{equation}
Since $r^{(k)}_1\in\Z_p$ for all $k\geq0$, it follows trivially that $|r^{(k)}_1|_p\leq1$.

\smallskip
We consider the non-archimedean conjugates $r^{(k)}_j$, $j\in\{2,\ldots,n\}$.
Since
$$
|\beta_j|_p\leq1\qmboxq{and}|d_\ell|_p\leq|\beta|_p,
$$
it follows by (\ref{non-archimedean}),
\begin{equation}\label{strict}
\begin{aligned}
|r^{(k)}_j|_p
&\leq
\max\left(
|\beta_j^k r_j^{(0)}|_p,\max_{1\leq i\leq k}(|d_i\beta_j^{k-i}|_p)\right) \\
&\leq\max\left(|r_j^{(0)}|_p,|\beta|_p\right)<\infty.
\end{aligned}
\end{equation}
Therefore, $|r^{(k)}_j|_p$ is bounded for all $k$ and $j$.

\smallskip
Now we consider the archimedean conjugates $r_j^{(k)}$, $j\in\{n+1,\ldots,2n\}$.
Let
$$
\gamma:=\max_{n+1\leq j\leq2n}|\beta_j|_\infty.
$$
Since
$$
\gamma<1\qmboxq{and}|d_i|_\infty<1,
$$
it follows from \eqref{konjrest} that
$$
|r_j^{(k)}|_\infty\leq
\gamma^k|r_j^{(0)}|_\infty+
\sum_{i=1}^k \gamma^{k-i}
<\infty
.
$$

\medskip
\noindent
We need a technical result.
\begin{lem}
\label{techlem}
Define the matrices
$$
B_p:=(\beta_j^{-i})\in\Q_p^{\,n\times n}\qmboxq{and}B_\infty:=(\beta_{n+j}^{-i})\in\C^{n\times n}
$$
with $i\in\{0,\ldots,n-1\}$ and $j\in\{1,\ldots,n\}$.
If
$$
R_p^{(k)}:=(r_1^{(k)},\ldots,r_n^{(k)}),\quad
R_\infty^{(k)}:=(r_{n+1}^{(k)},\ldots,r_{2n}^{(k)}),
$$
then
for every $k\geq0$, there exists a unique $n$-tuple
$$
W^{(k)}:=(w_0^{(k)},\ldots,w_{n-1}^{(k)})\in \A_p^n
$$
such that
$$
R_p^{(k)}=q^{-1}W^{(k)}B_p\quad\mbox{and}\quad
R_\infty^{(k)}=q^{-1}W^{(k)} B_\infty
$$
with $q\in\N$.
\end{lem}
\begin{proof}
The Lemma is proved is by induction.
For $k=0$, it follows from
$$
\beta^n_j=a_1\beta^{n-1}_j+\cdots+a_n,
$$
that
\begin{equation}
r_j^{(0)}
=q^{-1}\sum_{i=0}^{n-1} z_i \beta^{i}_j
=q^{-1}\sum_{i=1}^n w^{(0)}_i \beta^{-i}_j.
\end{equation}
If $k>0$, then
\begin{equation}
\begin{aligned}
r_j^{(k)}&=\beta_j r_j^{(k-1)}-d_k\\
&=q^{-1}\left(\beta_j\sum_{i=1}^n w^{(k-1)}_i \beta_j^{-i}-q d_k\right)\\
&=q^{-1}\left(w^{(k-1)}_1-q d_k+\sum_{i=1}^{n-1} w^{(k-1)}_{i+1} \beta_j^{-i}\right)\\
&=q^{-1}\left(\sum_{i=1}^n w^{(k)}_i \beta_j^{-i}\right).
\end{aligned}
\end{equation}
The Lemma follows from the definition of the matrices $B_p$ and $B_\infty$.
\end{proof}
We continue now with the proof of Theorem~\ref{bertrandschmidt}.
On $\A_p^n$, we define the vector norms
$$
\|(a_1,\ldots,a_n)\|_p:=\max_{1\leq i\leq n}|a_i|_p\qmboxq{and}
\|(a_1,\ldots,a_n)\|_\infty:=\max_{1\leq i\leq n}|a_i|_\infty.
$$
The induced matrix norms on $\A_p^{n\times n}$ are given by
$$
\|A\|_p = \max \limits _{1 \leq i,j \leq n}  | a_{ij} |_p\qmboxq{and}
\|A\|_\infty = \max \limits _{1 \leq i \leq n} \sum _{j=1} ^n | a_{ij} |_\infty.
$$
Since $B_p$ and $B_\infty$ are
invertible, and the $r_j^{(k)}$ are bounded 
for all $j\in\{1,\ldots,2n\}$, it follows by Lemma~\ref{techlem} that
\begin{align*}
\max_{0\leq j\leq n-1}|w_{j}^{(k)}|_p
&=\left\|W^{(k)}\right\|_p=\left\|q R_p^{(k)} B_p^{-1}\right\|_p\\
&\leq\left\|R_p^{(k)}\right\|_p\left\|q B_p^{-1}\right\|_p<\infty
\intertext{and analogously,}
\max_{0\leq j\leq n-1}|w_{j}|_\infty&<\infty.
\end{align*}
Since $w_j\in \A_p$ for all $j$,
by Lemma~\ref{finitenesslem}, the $w_j$ must be contained in a finite subset of $\A_p$.
Therefore,
the $r_1^{(k)}$ must be contained in a finite subset of $\Q(\beta)$
und thus, there exists an $m$ such that $r_1^{(k+m)}=r_1^{(k)}$ for all $k$ large enough.
This implies that the beta-expansion of $z$ is eventually periodic.
\end{proof}
\begin{expl}
Since $\beta=\tfrac1p$ is the root of
$f(x)=px-1$, one easily verifies that $\tfrac1p$ is a PC number.
Since $\Q(\tfrac1p)=\Q$, Theorem~\ref{bertrandschmidt} implies that
the numbers of $\Q\cap\Z_p$ admit eventually periodic $p$-adic expansions.
\end{expl}
\begin{thm}
\label{conversebertrandschmidt}
Let $S\subset\N$ be an infinite set of positive integers, such that $1\in S$.
If $S\subset\per(\beta)$,
then $\beta$ is a PC number or SC number.
\end{thm}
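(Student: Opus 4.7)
The plan is to split the proof into a non-archimedean part, where only the hypothesis $1\in S$ is needed, and an archimedean part, where the infiniteness of $S$ enters.

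For the non-archimedean part, I use $1\in S$ and its eventually periodic expansion $d_\beta(1)=\decpoint d_1\cdots d_m\overline{e_1\cdots e_\ell}$ to derive a monic polynomial identity $R_1(\beta)=0$ by multiplying the series representation of $1$ through by $\beta^{m+\ell}-\beta^m$. This shows at once that $\beta$ is an algebraic integer over $\A_p$, and since the coefficients of $R_1$ are ultrametric sums of $0,\pm1$ and $\pm d_i,\pm e_j$ with $d_i,e_j\in\mathcal{N}$, they all satisfy $\nu_p(c)\ge\nu_p(\beta)$ (using $|d|_p\le|\beta|_p$ for $d\in\mathcal{N}$).

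Every non-archimedean conjugate $\beta_j$ is a root of the minimal polynomial of $\beta$, which divides $R_1$, and hence a root of $R_1$. Examining the Newton polygon of $R_1$ over $\Q_p$, the rightmost edge has slope $-\nu_p(\beta)>0$ and ends at $(m+\ell,0)$; if its left endpoint were a vertex $(m+\ell-k,k\nu_p(\beta))$ with $k\ge2$, the coefficient bound would be violated at that vertex, forcing the edge to have length exactly one. The same bound forces every other edge of the polygon to have non-positive slope, so every root of $R_1$ different from $\beta$ has $p$-adic absolute value at most $1$; in particular $|\beta_j|_p\le1$ for $j\ge2$.

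For the archimedean part, take any $z\in S$. The same computation applied to its expansion gives $z=\sum_{i=1}^\infty d_i^{(z)}\beta^{-i}$, and since $z\in\Q$ is Galois-fixed, the companion identity $z=\sum_{i=1}^\infty d_i^{(z)}\widetilde{\beta}^{-i}$ holds in any completion in which the series converges. For an archimedean conjugate $\widetilde{\beta}$ of $\beta$ with $|\widetilde{\beta}|_\infty>1$ the series converges absolutely in $\C$ and yields $|z|_\infty<1/(|\widetilde{\beta}|_\infty-1)$, a bound independent of $z$; this is incompatible with the unboundedness of $S\subset\N$. Hence $|\widetilde{\beta}|_\infty\le1$ for every archimedean conjugate.

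To finish, either every archimedean conjugate has $|\cdot|_\infty<1$, in which case $\beta$ is a PC number, or at least one conjugate lies on the unit circle, whereupon Remark~\ref{remsalem} forces the minimal polynomial to be self-reciprocal and every archimedean conjugate to sit on the unit circle, making $\beta$ an SC number. The step I expect to be most delicate is the Newton polygon analysis, as the single bound $\nu_p(c)\ge\nu_p(\beta)$ must simultaneously rule out an elongated positive-slope edge at the right end of the polygon and any auxiliary positive-slope edge further to the left.
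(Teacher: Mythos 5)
Your overall strategy coincides with the paper's: use the eventually periodic expansion of $1$ to produce a monic polynomial relation over $\A_p$, read off from its Newton polygon that exactly one non-archimedean root has valuation $\nu_p(\beta)<0$ while all others have non-negative valuation, and then use the unboundedness of $S$ together with the geometric bound $\sum_{i\ge1}|d_i|_\infty|\widetilde\beta|_\infty^{-i}<1/(|\widetilde\beta|_\infty-1)$ to exclude archimedean conjugates outside the unit circle. The non-archimedean half is sound: all coefficients of $R_1$ have valuation $\ge\nu_p(\beta)$, the coefficient in degree $m+\ell-1$ is $-d_1=-\{\beta\}_p$ with valuation exactly $\nu_p(\beta)$, and convexity then forces the unique positive-slope edge to be the terminal one of horizontal length one, exactly as in the paper.

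There is, however, a genuine gap in the archimedean step. You assert that because $z\in\Q$ is Galois-fixed, the identity $z=\sum_{i\ge1}d_i^{(z)}\widetilde\beta^{-i}$ ``holds in any completion in which the series converges.'' That is not a valid principle: conjugating the finite identity $z=\sum_{i=1}^{s}d_i^{(z)}\beta^{-i}+r^{(s)}\beta^{-s}$ only yields $z=\sum_{i=1}^{s}d_i^{(z)}\widetilde\beta^{-i}+\widetilde{r^{(s)}}\,\widetilde\beta^{-s}$, and the conjugated remainders $\widetilde{r^{(s)}}$ could a priori grow like $|\widetilde\beta|_\infty^{s}$, in which case the (absolutely convergent) series would sum to something other than $z$. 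This is precisely the point where the hypothesis $z\in\per(\beta)$ must be used: eventual periodicity means the orbit $\{T^{s}(z)\}_{s\ge0}$ is finite, hence the $\widetilde{r^{(s)}}$ take only finitely many values and the tail $\widetilde{r^{(s)}}\,\widetilde\beta^{-s}$ tends to $0$, which is how the paper justifies the identity. You have the hypothesis available but replace the needed argument with an incorrect general claim; with that one sentence repaired, your proof matches the paper's.
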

\begin{proof}
Since $1\in S\subset\per(\beta)$, it follows that
$
d_\beta(1)=\decpoint d_1 d_2 \cdots
$
is eventually periodic.
If $k$ is the length of the preperiod and $\ell$ is the length of the period, then
$d_{i+\ell}=d_i$ for all $i\geq k+1$.
Therefore,
$$
d_\beta(1)=\decpoint
d_1\cdots d_k d_{k+1}\cdots d_{k+\ell}d_{k+1}\cdots d_{k+\ell}\cdots
$$
and thus,
\begin{equation}
\label{pol}
\left(
\beta^\ell-1
\right)\beta^{k}
\left(1-\frac{d_1}{\beta}-\cdots-\frac{d_k}{\beta^{k}}\right)-
\beta^{\ell}
\left(\frac{d_{k+1}}{\beta}+\cdots+\frac{d_{k+\ell}}{\beta^{\ell}}\right)=0.
\end{equation}
Note that in the case that $d_\beta(1)$ is finite, the second summand of \eqref{pol} is zero.
Since $d_i\in \A_p$, it follows that $\beta$ is an algebraic integer over $\A_p$.

\smallskip
We consider the Newton polygon of \eqref{pol}. Since $d_1=\{\beta\cdot1\}_p$, it follows that
$$
\nu_p(1)=0,\quad
\nu_p(-d_1)=\nu_p(\beta)<0
\qmboxq{and}
\nu_p(d_j)\geq\nu_p(\beta)\qmboxq{for}
j\geq2.
$$
Therefore, the Newton polygon contains one edge with slope $-\nu_p(\beta)>0$ and all other edges have slopes $\leq0$.
By Proposition~\ref{neukirchprop63}~(i), there exists one non-archimedean root $\alpha$ of \eqref{pol} with $\nu_p(\alpha)=\nu_p(\beta)<0$ and all other non-archimedean roots $\tilde\alpha$  have $\nu_p(\tilde\alpha)\geq0$.
Since \eqref{pol} is a multiple of the minimal polynomial of $\beta$, it follows that $\alpha=\beta$ and $|\tilde\alpha|\leq1$.

\smallskip
Now we examine the archimedean conjugates of $\beta$. Assume that there exists an ar\-chi\-me\-de\-an conjugate $\beta_j$,
$j\in\{n+1,\ldots,2n\}$, such that $|\beta_j|_\infty>1$.
Since $S$ is infinite, there exists a $k\in S$, with
\begin{equation}
\label{kbound}
k>\frac{1}{|\beta_j|_\infty-1}.
\end{equation}
It follows that
$$
k=
\sum_{i=1}^s\frac{e_{i}}{\beta^{i}}
+\frac{r_1^{(s)}}{\beta^{s}}
=
\sum_{i=1}^s\frac{e_i}{\beta_j^i}
+\frac{r_j^{(s)}}{\beta_j^s}
\qmboxq{for all}s\geq0.
$$
Since $k\in\per(\beta)$, the sequence
$d_\beta(k)=\bullet e_1 e_2\cdots$
is eventually periodic and thus, the
$r_1^{(s)}$ and $r_j^{(s)}$ can take only finitely many values.
Therefore, the sequence
$$
k=
\sum_{i=1}^\infty\frac{e_i}{\beta_j^i}
$$
converges. Since $|e_i|_\infty<1$, it follows that
$$
k\leq
\frac{1}{|\beta_j|_\infty-1},
$$
which contradicts \eqref{kbound}.
\end{proof}
\section{Shift radix systems and finite beta-expansions}
Shift radix systems, were introduced in \cite{srs1} in order to
provide a unified notation for
two well known types of number systems, namely,
canonical number systems as well as beta-expansions of real numbers.

\smallskip
Let $n\ge 1$ be an integer and
${\bf r}=(r_1,\dots,r_n) \in \R^n$.
Let $\lfloor\cdot\rfloor$ and $\lceil\cdot\rceil$ denote the real floor and ceiling functions respectively.
To the vector ${\bf r}$,
we associate the mapping
$
\tilde{\tau}_{{\bf r}} :  \Z^n \to \Z^n
$
as follows:
if
${\bf z}=(z_1,\dots,z_n) \in \Z^n$ then let
\begin{equation}
\label{tildetau}
\tilde{\tau}_{{\bf r}}({\bf z}):=(z_2,\ldots,z_{n},-\lfloor {\bf r}
{\bf z}\rfloor),
\end{equation}
where ${\bf r}{\bf z}$ is the
the euclidian inner
product of ${\bf r}$ and ${\bf z}$, i.e.
$$
{\bf r} {\bf z}=r_1 z_1 +\ldots + r_n z_n.
$$
Then $(\Z^n,\tilde{\tau}_{{\bf r}})$ is called {\em a shift radix system} (for short SRS) on $\Z^n$.
Note that $\mathbf{0}=(0,\dots,0)$ is a fixed point of
$\tilde{\tau}_{{\bf r}}$. We say that the orbit of $\mathbf{z}$ under $\tilde{\tau}_{{\bf r}}$ ends up in $\mathbf{0}$,
if there exists a $k\geq0$ such that $\tilde{\tau}_{{\bf r}}^k(\mathbf{z})=\mathbf{0}$.

\smallskip
It is an important problem for canonical number systems as well as beta-expansions
of real numbers to determine wether each number admits a finite expansion.
In SRS language, this translates to the following question:

\bigskip
\noindent
\emph{For which $\mathbf{r}\in\Z^n$, we have that all orbits of
$(\Z^n,\tilde{\tau}_{{\bf r}})$ end up in
$\mathbf{0}$?
}

\bigskip
In the present section, we will prove that
beta-expansions of $p$-adic numbers can be described by a slight variation of this dynamical system,
namely by $(\Z^n,\tau_{{\bf r}})$ with
\begin{equation*}
\tau_\mathbf{r}(\mathbf{z})=(z_2,\ldots,z_n,-\lceil \mathbf{r}\mathbf{z}\rceil).
\end{equation*}
In Proposition~\ref{srslemma}, we will prove that for a fixed given vector
$\mathbf{r}\in\R^n$, both of these systems show exactly the same behaviour.
Let
\begin{equation}
\label{dn0}
\begin{aligned}
\mathcal{D}_{n}&:=\{\mathbf{r}\in\R^d:\forall\ \mathbf{z}\in\Z^d,
\mbox{ the sequence }(\tilde\tau_\mathbf{r}^k(\mathbf{z}))_{k\geq1}\mbox{ is eventually periodic}\},\\
{\mathcal D}_{n}^0&:=
\{\mathbf{r}\in\R^d:\forall\ \mathbf{z}\in\Z^d, \exists
k>0:\tilde\tau_\mathbf{r}^k(\mathbf{z})=\mathbf{0}\}.
\end{aligned}
\end{equation}
In \cite{srs1,srs2}, it is proved that the inclusions
$\mathcal{D}_{n}^0\subset\mathcal{D}_{n}$ and
$\mathcal{E}_n\subset\mathcal{D}_n\subset\overline{\mathcal{E}}_n$
hold.
It is easy to verify that $\mathcal{D}_1^0=[0,1)$ and $\mathcal{D}_1=[-1,1]$. However,
a full description of $\mathcal{D}_2^0$ and $\mathcal{D}_2$ is already unknown.
The sets $\mathcal{D}_n^0$ and $\mathcal{D}_n$ have been extensively studied in the last years
(\emph{cf.}~\cite{srs1,srs2,srs3,srs4,ssrs1,bkt12,ssrs2,Scheicher:07,weitzer}).
In Figure~\ref{d20}, an approximation of $\mathcal{D}_2^0$ is shown.
\begin{figure}[tbp]
\includegraphics[width=11cm]{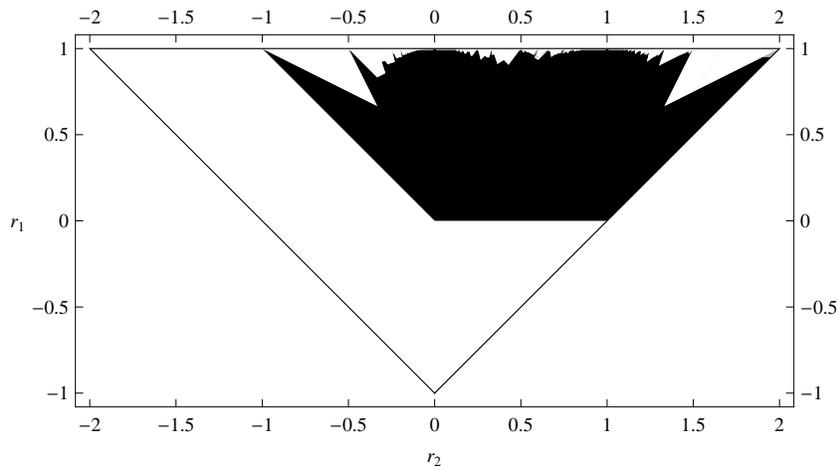}
\caption{
\label{d20}The set $\mathcal{D}_2^0$ (in black) inside $\mathcal{D}_2$ (the interior of the triangle).
}
\end{figure}

It is conjectured, that $\mathcal{D}_n^0\subset\mathcal{E}_n$  which is equivalent to
$\mathcal{D}_n^0\cap\partial\mathcal{E}_n=\emptyset$ (\emph{cf.}~\cite{srs2}).
Up to now, this conjecture has been proved only for $n\leq3$ (\emph{cf.}~\cite{srs2,bkt12}).
For a very recent survey on SRS, we refer to \cite{KT2013}.
\begin{prop}
\label{srslemma}
Let ${\bf r} \in \mathbb{R}^n$. All orbits of $\left(\mathbb{Z}^n,\tau_{\bf r}\right)$ end up in ${\bf 0}$  if and only if
${\bf r} \in \mathcal{D}_n^0$.
\end{prop}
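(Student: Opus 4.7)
The plan is to exhibit a simple conjugacy between the two dynamical systems $(\mathbb{Z}^n, \tau_{\mathbf{r}})$ and $(\mathbb{Z}^n, \tilde\tau_{\mathbf{r}})$ implemented by the negation map $\iota: \mathbb{Z}^n \to \mathbb{Z}^n$, $\iota(\mathbf{z}) := -\mathbf{z}$. The whole argument rests on the elementary identity $\lceil x \rceil = -\lfloor -x \rfloor$ for every $x \in \mathbb{R}$.

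First I would verify the conjugation relation
\begin{equation*}
\iota \circ \tau_{\mathbf{r}} \circ \iota = \tilde\tau_{\mathbf{r}}.
\end{equation*}
Indeed, for $\mathbf{z} = (z_1, \ldots, z_n) \in \mathbb{Z}^n$, one computes $\iota(\mathbf{z}) = (-z_1, \ldots, -z_n)$, so $\mathbf{r} \cdot \iota(\mathbf{z}) = -\mathbf{r}\mathbf{z}$, and hence
\begin{equation*}
\tau_{\mathbf{r}}(\iota(\mathbf{z})) = (-z_2, \ldots, -z_n, -\lceil -\mathbf{r}\mathbf{z} \rceil) = (-z_2, \ldots, -z_n, \lfloor \mathbf{r}\mathbf{z} \rfloor).
\end{equation*}
Applying $\iota$ once more gives $(z_2, \ldots, z_n, -\lfloor \mathbf{r}\mathbf{z} \rfloor) = \tilde\tau_{\mathbf{r}}(\mathbf{z})$, as claimed.

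Since $\iota$ is an involution, iteration of the conjugacy yields $\tilde\tau_{\mathbf{r}}^k = \iota \circ \tau_{\mathbf{r}}^k \circ \iota$ for all $k \geq 0$, i.e.\ $\tilde\tau_{\mathbf{r}}^k(\mathbf{z}) = -\tau_{\mathbf{r}}^k(-\mathbf{z})$. Because $\iota(\mathbf{0}) = \mathbf{0}$, this means that for any $\mathbf{z} \in \mathbb{Z}^n$ and any $k$,
\begin{equation*}
\tau_{\mathbf{r}}^k(\mathbf{z}) = \mathbf{0} \iff \tilde\tau_{\mathbf{r}}^k(-\mathbf{z}) = \mathbf{0}.
\end{equation*}

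Finally, since the map $\mathbf{z} \mapsto -\mathbf{z}$ is a bijection of $\mathbb{Z}^n$, the statement "for every $\mathbf{z} \in \mathbb{Z}^n$ there exists $k > 0$ with $\tau_{\mathbf{r}}^k(\mathbf{z}) = \mathbf{0}$" holds if and only if "for every $\mathbf{z}' \in \mathbb{Z}^n$ there exists $k > 0$ with $\tilde\tau_{\mathbf{r}}^k(\mathbf{z}') = \mathbf{0}$"; the latter is precisely the condition $\mathbf{r} \in \mathcal{D}_n^0$ of \eqref{dn0}. There is no real obstacle here: once the conjugation identity is spotted, the equivalence is immediate, so the only part that requires care is getting the sign bookkeeping in the conjugation right.
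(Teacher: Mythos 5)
Your proof is correct and follows essentially the same route as the paper: both establish the conjugation $\tau_{\mathbf{r}}(\mathbf{z})=-\tilde\tau_{\mathbf{r}}(-\mathbf{z})$ via the identity $\lceil x\rceil=-\lfloor -x\rfloor$, iterate it, and conclude using that negation is a bijection fixing $\mathbf{0}$. The sign bookkeeping checks out.
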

\begin{proof}
At first we show that $\tau_{\bf r}({\bf z})= - \tilde\tau_{\bf r}(-{\bf z})$ for all ${\bf z} \in \mathbb{Z}^n$. Indeed, consider an arbitrary ${\bf z}=(z_1,\ldots,z_n) \in \mathbb{Z}^n$ and note that for any real number $x$ we have $\left\lceil x \right\rceil = -\left\lfloor -x \right\rfloor$.
We obtain
\begin{equation*}
\begin{aligned}
\tau_{\bf r}({\bf z})
&= \left(z_2,\ldots,z_n,-\left\lceil {\bf rz} \right\rceil\right)= \left(z_2,\ldots,z_n,\left\lfloor -{\bf rz} \right\rfloor\right)\\
&=-\left(-z_2,\ldots,-z_n, -\left\lfloor -{\bf rz} \right\rfloor\right)=-\tilde\tau_{\bf r}(-{\bf z}).
\end{aligned}
\end{equation*}
By induction, it follows  that
\begin{equation}
\label{tautildetau1}
\tau^k_{\bf r}({\bf z})=-\tilde\tau^k_{\bf r}(-{\bf z}).
\end{equation}
Now we easily see that all orbits of $\left(\mathbb{Z}^n,\tau_{\bf r}\right)$ end up in ${\bf 0}$ if and only if all orbits of $\left(\mathbb{Z}^n,\tilde\tau_{\bf r}\right)$ do so.
\end{proof}
In order to prove Theorem~\ref{thmfin}, which is the main result of this section, we need the following preliminary results.
\begin{prop}
\label{lemabs}
If $U,V\in\Q_p$, then
$$
|U-\{U+V\}_p|_p=|\lfloor U+V\rfloor_p-V|_p=
\begin{cases}
|\lfloor U\rfloor_p|_p,&\mbox{if}\quad|V|_p\leq1;\\
|\{V\}_p|_p,           &\mbox{if}\quad|V|_p>1;
\end{cases}
$$
or equivalently,
$$
\nu_p(U-\{U+V\}_p)=\nu_p(\lfloor U+V\rfloor_p-V)=
\begin{cases}
\nu_p(\lfloor U\rfloor_p),&\mbox{if}\quad\nu_p(V)\geq0;\\
\nu_p(\{V\}_p),           &\mbox{if}\quad\nu_p(V)<0.
\end{cases}
$$
\end{prop}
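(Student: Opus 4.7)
The plan is as follows. The identity $U-\{U+V\}_p=\lfloor U+V\rfloor_p-V$ is free from the Artin decomposition $U+V=\lfloor U+V\rfloor_p+\{U+V\}_p$ rearranged; the two quantities are literally equal (not merely equal in norm), so the first equality of the proposition requires no separate argument. It remains only to evaluate one of them in each of the two cases. The equivalent reformulation in terms of $\nu_p(\cdot)$ follows immediately via $|x|_p=p^{-\nu_p(x)}$.

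In the case $|V|_p\leq 1$, i.e.\ $V\in\Z_p$, we have $\{V\}_p=0$ and $\lfloor V\rfloor_p=V$. I would then write
$$U+V=\bigl(\lfloor U\rfloor_p+V\bigr)+\{U\}_p,$$
which displays $U+V$ as an element of $\Z_p$ plus an element of $\A_p\cap[0,1)$. The uniqueness of the Artin decomposition forces $\lfloor U+V\rfloor_p=\lfloor U\rfloor_p+V$ and $\{U+V\}_p=\{U\}_p$, so both sides of the proposition collapse to $\lfloor U\rfloor_p$, whose $p$-adic norm is $|\lfloor U\rfloor_p|_p$, as required.

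In the case $|V|_p>1$, I would instead split off the fractional part of $V$: since $\lfloor U+V\rfloor_p\in\Z_p$ and $\lfloor V\rfloor_p\in\Z_p$, the element $a:=\lfloor U+V\rfloor_p-\lfloor V\rfloor_p$ lies in $\Z_p$, and
$$\lfloor U+V\rfloor_p-V=a-\{V\}_p.$$
Now $|a|_p\leq 1<|V|_p=|\{V\}_p|_p$, and because these two $p$-adic norms are distinct, the strict form of the ultrametric equality in \eqref{ultra} gives $|a-\{V\}_p|_p=|\{V\}_p|_p$, which is the asserted value.

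There is no real obstacle; the argument is pure bookkeeping with the Artin decomposition and the sharp ultrametric inequality. The only subtlety worth highlighting is that in general $\{U+V\}_p\ne\{U\}_p+\{V\}_p$ (because of carries in the $p$-adic expansion), so the conclusion in Case 1 must be drawn from the \emph{uniqueness} of the Artin decomposition rather than from naïve additivity of $\{\cdot\}_p$.
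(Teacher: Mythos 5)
Your proof is correct, and it takes the only natural route: the paper itself dismisses this proposition with the single word ``Straight forward,'' so your argument simply supplies the routine bookkeeping the authors omit. Both case analyses are sound --- in particular you are right that the uniqueness of the Artin decomposition (rather than additivity of $\{\cdot\}_p$) is the point in the case $|V|_p\leq 1$, and the sharp ultrametric equality \eqref{ultra} handles the case $|V|_p>1$.
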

\begin{proof}
Straight forward.
\end{proof}
\begin{prop}\label{purely}
Let $\beta$ be an arbitrary element of $\Q_p$ with $|\beta|_p>1$,
and let $z\in \A_p[\beta^{-1}]\cap\Z_p$ have purely
periodic beta-expansion with period $\ell$. Then $z\in \A_p[\beta]\cap\Z_p$.
\end{prop}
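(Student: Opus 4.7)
The plan is to turn the purely periodic expansion into a closed algebraic relation between $z$ and $\beta$, and then use the hypothesis $z\in\A_p[\beta^{-1}]$ to eliminate all negative powers of $\beta$. Starting from the defining recursion $T(x)=\beta x-\{\beta x\}_p$ together with $d_k=\{\beta T^{k-1}(z)\}_p$, a routine induction gives
$$T^k(z)=\beta^k z-\sum_{i=1}^{k}d_i\beta^{k-i}.$$
Pure periodicity with period $\ell$ forces $T^\ell(z)=z$; this uses the commutative diagram from Section~3 together with the fact that $d_\beta$ is injective, which in turn follows from $|r^{(k)}\beta^{-k}|_p\le|\beta|_p^{-k}\to0$ in $\Q_p$. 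Setting
$$P(\beta):=\sum_{k=1}^{\ell}d_k\beta^{\ell-k},$$
which belongs to $\A_p[\beta]$ because every $d_k\in\mathcal{N}\subset\A_p$ and every exponent $\ell-k$ is non-negative, the identity $T^\ell(z)=z$ becomes the master equation
$$(\beta^\ell-1)\,z=P(\beta).$$

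Next, I would iterate this relation in order to move to arbitrarily high positive powers of $\beta$. Induction on $N\ge1$ (essentially the polynomial identity $\beta^{N\ell}-1=(\beta^\ell-1)(1+\beta^\ell+\cdots+\beta^{(N-1)\ell})$) yields
$$z\,\beta^{N\ell}=z+P(\beta)\bigl(1+\beta^\ell+\cdots+\beta^{(N-1)\ell}\bigr),$$
equivalently
$$z=z\,\beta^{N\ell}-P(\beta)\sum_{j=0}^{N-1}\beta^{j\ell}.$$
The second summand manifestly lies in $\A_p[\beta]$ for every $N$.

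Finally, the hypothesis $z\in\A_p[\beta^{-1}]$ writes $z=\sum_{i=0}^{m}c_i\beta^{-i}$ for some $c_i\in\A_p$ and some $m\ge0$. Choosing $N$ so that $N\ell\ge m$, every exponent $N\ell-i$ is non-negative, giving
$$z\,\beta^{N\ell}=\sum_{i=0}^{m}c_i\beta^{N\ell-i}\in\A_p[\beta].$$
Combining both summands yields $z\in\A_p[\beta]$, and together with the given $z\in\Z_p$ this delivers $z\in\A_p[\beta]\cap\Z_p$. I do not anticipate a serious obstacle: the argument is essentially formal manipulation with polynomial identities, and no algebraicity hypothesis on $\beta$ is needed beyond $|\beta|_p>1$. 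The only point requiring mild care is the justification that pure periodicity of the symbolic sequence lifts to the dynamical equation $T^\ell(z)=z$, but this is handled once by the convergence estimate $|r^{(k)}\beta^{-k}|_p\to0$.
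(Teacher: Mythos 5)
Your proposal is correct and is essentially the paper's own argument: the authors likewise pick $m$ with $\beta^{m\ell}z\in\A_p[\beta]$ and use pure periodicity to write $z=\beta^{m\ell}z-d_1\beta^{m\ell-1}-\cdots-d_{m\ell}\in\A_p[\beta]$, which is exactly your identity $z=z\beta^{N\ell}-P(\beta)\sum_{j=0}^{N-1}\beta^{j\ell}$ after unpacking $P(\beta)$ via the periodicity of the digits. Your extra care in justifying $T^{\ell}(z)=z$ from the symbolic periodicity (via $|r^{(k)}\beta^{-k}|_p\to 0$) is a detail the paper leaves implicit, but the route is the same.
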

\begin{proof} Assume $z\in\ \A_p[\beta^{-1}]\cap\Z_p$
is purely periodic with period $\ell$. Let $d_\beta(z)=\decpoint d_1 d_2\ldots$.
Since $z\in \A_p[\beta^{-1}]$, there is an $m$ such that
$\beta^{m\ell}z\in \A_p[\beta]$. Therefore
\belowdisplayskip=-12pt
$$
z=\beta^{m\ell}z-d_1\beta^{m\ell-1}-\cdots-d_{m\ell}\in \A_p[\beta].
$$
\end{proof}
Now we are able to state the main result of this section.
\begin{thm}\label{thmfin}
Let $\beta\in\Q_p$ with $|\beta|_p>1$.
Then
\begin{equation}
\label{propf}
\tag{F}
\fin(\beta)=\A_p[\beta^{-1}]\cap\Z_p,
\end{equation}
if and only if $\beta$ is an PC number and its minimal polynomial
\begin{equation}
\label{minpolfin}
x^n-a_1 x^{n-1}-\cdots-a_n,\quad a_i\in \A_p[x]
\end{equation}
fulfills
\begin{subequations}
\label{subeqns}
\begin{equation}
\label{aibound}
\max_{2\leq i\leq n}|a_i|_p<|a_1|_p\quad\mbox{and}
\end{equation}
\begin{equation}
\label{aindn0}
-\mathbf{a}\in\mathcal{D}_n^0,\qmboxq{where}\mathbf{a}:=(a_n,\ldots,a_1).
\end{equation}
\end{subequations}
\end{thm}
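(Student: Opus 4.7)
The inclusion $\fin(\beta)\subset\A_p[\beta^{-1}]\cap\Z_p$ is immediate from the definition of a finite expansion. My plan for the reverse and for the full characterisation is to establish a conjugacy between the $T$-dynamics on $\A_p[\beta]\cap\Z_p$ and the SRS map $\tau_{-\mathbf{a}}$ on $\Z^n$, encoded through the state vector $W^{(k)}$ of Lemma~\ref{techlem}, and to use Proposition~\ref{purely} to reduce the general statement to this setting.

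\smallskip
\noindent\emph{If direction.} By Theorem~\ref{bertrandschmidt} every $z\in\A_p[\beta^{-1}]\cap\Z_p$ has eventually periodic $T$-orbit, and by Proposition~\ref{purely} each cyclic point already lies in $\A_p[\beta]\cap\Z_p$. It therefore suffices to show that no nonzero $z\in\A_p[\beta]\cap\Z_p$ is purely periodic. For such $z$ apply Lemma~\ref{techlem} with $q:=|\beta|_p\in\N$: by the PC hypothesis together with~\eqref{aibound} one has $qa_1\in\Z^\times$ and $qa_i\in p\Z$ for $i\ge2$, and $W^{(0)}$ can be chosen in $\Z^n$. Expanding $\beta r_1^{(k)}=q^{-1}(w_1^{(k)}+\sum_{i=1}^{n-1}w_{i+1}^{(k)}\beta^{-i})$ and using the identity $1=\sum_{i=1}^n a_i\beta^{-i}$ gives
\[
W^{(k+1)}=M\bigl(W^{(k)}-qd_{k+1}\mathbf{e}_1\bigr),
\]
where $M$ is the companion matrix of the minimal polynomial and $\mathbf{e}_1=(1,0,\dots,0)$. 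A $p$-adic valuation count shows that only $q^{-1}w_1^{(k)}$ among the summands of $\beta r_1^{(k)}$ has $\nu_p<0$, so
\[
qd_{k+1}=w_1^{(k)}\bmod q,\qquad w_1^{(k)}-qd_{k+1}=q\lfloor w_1^{(k)}/q\rfloor,
\]
and $W^{(k)}\in\Z^n$ for every~$k$. Setting $v_i^{(k)}:=\lfloor w_1^{(k-n+i)}/q\rfloor$ and $V^{(k)}:=(v_1^{(k)},\dots,v_n^{(k)})$, a telescoping of the recurrence converts the dynamics into
\[
v_i^{(k+1)}=v_{i+1}^{(k)}\quad(i<n),\qquad v_n^{(k+1)}=\bigl\lfloor a_nv_1^{(k)}+a_{n-1}v_2^{(k)}+\cdots+a_1v_n^{(k)}\bigr\rfloor,
\]
which is precisely the SRS map $\tau_{-\mathbf{a}}$ on $\Z^n$. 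Via Proposition~\ref{srslemma} condition~\eqref{aindn0} then forces $V^{(k)}=\mathbf{0}$ for some~$k$; the $W$-recurrence consequently degenerates to a pure left shift with $0$ appended, giving $W^{(k+n)}=\mathbf{0}$ and $T^{k+n}(z)=0$, whence $z=0$ on the cycle.

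\smallskip
\noindent\emph{Only if direction.} Applying~(F) to $1\in\A_p[\beta^{-1}]\cap\Z_p$ makes $d_\beta(1)$ finite and hence eventually periodic, so Theorem~\ref{conversebertrandschmidt} yields that $\beta$ is PC or SC. The SC case is ruled out by an archimedean argument: if $|\beta_j|_\infty=1$, then $|r_j^{(s)}|_\infty$ cannot decay, so for $k\in\N$ large the trajectory $r_j^{(s)}$ stays bounded away from~$0$, contradicting $T^s(k)=0$ (a rational and all its Galois conjugates must vanish simultaneously). Condition~\eqref{aibound} is necessary because equality $|a_j|_p=|a_1|_p$ for some $j\ge2$ would allow additional summands $q^{-1}w_{i+1}^{(k)}\beta^{-i}$ to acquire negative $p$-adic valuation, breaking the identification $qd_{k+1}=w_1^{(k)}\bmod q$ and admitting nontrivial purely periodic orbits; one may then exhibit an explicit $z\in\A_p[\beta^{-1}]\cap\Z_p$ with non-terminating expansion. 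Finally, the SRS conjugacy of the if-direction is reversible on orbits, so a nontrivial $\tau_{-\mathbf{a}}$-orbit avoiding $\mathbf{0}$ pulls back to a nontrivial $T$-cycle and contradicts~(F); therefore~\eqref{aindn0} holds.

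\smallskip
\noindent\emph{Main obstacle.} The technical core is the telescoping reformulation of the companion-matrix-driven $W$-recurrence as the integer-valued SRS map $\tau_{-\mathbf{a}}$ on~$V^{(k)}$. Its decisive ingredient is the valuation bookkeeping that isolates $w_1^{(k)}$ as the unique contributor to the fractional part of~$\beta r_1^{(k)}$—exactly the juncture at which~\eqref{aibound} enters essentially—after which Proposition~\ref{srslemma} combined with~\eqref{aindn0} delivers termination at $\mathbf{0}$, hence (F).
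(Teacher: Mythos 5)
Your overall strategy coincides with the paper's: reduce to $\A_p[\beta]\cap\Z_p$ via Theorem~\ref{bertrandschmidt} and Proposition~\ref{purely}, conjugate the beta-transformation to the SRS $\tau_{-\mathbf{a}}$ on $\Z^n$, and invoke $\mathcal{D}_n^0$ together with Proposition~\ref{srslemma}. But the sufficiency direction has a genuine gap at its pivot: you assert that $W^{(0)}$ ``can be chosen in $\Z^n$'', and your valuation count isolating $w_1^{(k)}$ as the sole contributor to $\{\beta r_1^{(k)}\}_p$ presupposes $\nu_p(w_i^{(k)})\geq0$ for all $i$. For a general $z\in\A_p[\beta]\cap\Z_p$ the coordinates in any such basis lie only in $\A_p$ and may have arbitrarily negative valuation (e.g.\ $p^{-1}a_n\beta^{-1}$ lies in $\Z_p$ under \eqref{aibound} but has coordinate $p^{-1}$), and restricting to purely periodic $z$ does not give integrality for free: to see that a periodic orbit must have integral coordinates you already need the ``preliminary phase'' argument you omit. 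The paper supplies it by writing $z_{k+n}=U_k-\{U_k+V_k\}_p$, observing that \eqref{aibound} forces $\max_i|v_i|_p<1$ for the quantities in \eqref{base2}, and applying Proposition~\ref{lemabs} to show that $\max_j|z_{k+j}|_p$ strictly decreases until all coordinates lie in $\Z$; only from that point on does $\sigma$ coincide with $\tau_{-\mathbf{a}}$. Without this step your identification $qd_{k+1}=w_1^{(k)}\bmod q$ is unjustified and the conjugacy is not established.

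Two steps of your converse are also asserted rather than proven, and one of them would fail as stated. First, your exclusion of SC numbers by an archimedean decay argument does not work: with $|\beta_j|_\infty=1$ and digits $|e_i|_\infty<1$, the recursion $r_j^{(s)}=\beta_j r_j^{(s-1)}-e_s$ yields only $|r_j^{(s)}|_\infty>|r_j^{(0)}|_\infty-s$, so the conjugate orbit is not bounded away from $0$ and nothing prevents $T^s(k)=0$ for large $s$. The paper instead rules out SC numbers structurally: for $n>2$ the minimal polynomial of an SC number is self-reciprocal, so $a_1=a_{n-1}$ contradicts \eqref{aibound}, while for $n=2$ one uses $\mathcal{D}_2^0\cap\partial\mathcal{E}_2=\emptyset$ against \eqref{aindn0}. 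Second, the necessity of \eqref{aibound} is the technical heart of the converse, and ``one may then exhibit an explicit $z$'' is not a proof: the paper constructs the starting vector $\mathbf{z}^{(0)}=(0,\ldots,0,p^{-1})$ and runs a delicate induction (through the indices $h_0$, $i_0$, $j_0$ and Proposition~\ref{lemabs}) showing that some coordinate retains negative $p$-adic valuation forever, so the orbit never reaches $\mathbf{0}$. As written, the proposal identifies the right skeleton but leaves unproved precisely the points where \eqref{aibound} does its work.
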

\begin{proof}
We will prove first that \eqref{subeqns} implies \eqref{propf}.
Since it is trivial that $\fin(\beta)\subset \A_p[\beta^{-1}]\cap\Z_p$, we will
prove only the opposite inclusion.
The proof runs in two steps. Condition
\eqref{aibound} ensures that
after some preliminary phase, the dynamical system reduces
to a classical SRS. Condition
\eqref{aindn0} ensures that all orbits of this SRS end up in $\bf{0}$.

\smallskip
Let $z\in \A_p[\beta^{-1}]\cap\Z_p$.
From Theorem~\ref{bertrandschmidt}, it
follows that
$$
\A_p[\beta^{-1}]\cap\Z_p\subset\Q(\beta)\subset\per(\beta).
$$
Thus, $z$ has an eventually periodic expansion.
Without loss of generality, we can assume that $z\in \A_p[\beta]$.
Otherwise, we replace $z$ by an element from the orbit with purely periodic expansion
and apply Proposition~\ref{purely}.
Let
$$
\mathcal{B}=\{1,\beta,\ldots,\beta^{n-1}\}\qmboxq{and}\mathcal{V}=\{v_1,\ldots,v_{n}\}
$$
where
\begin{subequations}
\label{base}
\begin{align}
\label{base1}
v_{j}
&=\beta^{n-j}-a_1\beta^{n-j-1}-\cdots-a_{n-j}\\
\label{base2}
&=\frac{a_{n-j+1}}{\beta}+\cdots+\frac{a_n}{\beta^j}
\end{align}
\end{subequations}
for $j\in\{1,\ldots,n\}$.
Note $v_n=1$.
Let
$\mathbf{v}=(v_1,\ldots,v_n)$.
Then both $\mathcal{B}$ and $\mathcal{V}$ are two different bases of $\A_p[\beta]$
considered as a lattice over $\A_p$.
Using (\ref{base}),
the coordinates with respect to $\mathcal{V}$
can be computed from the coordinates with respect to $\mathcal{B}$
by a linear system of equations.
In this way, we define a bijection $\varphi:\A_p[\beta]\to \A_p^n$ by $$ \varphi(z):=(z_1,\ldots,z_{n}). $$
By construction, $\varphi^{-1}$ is given by
$$
\varphi^{-1}\left((z_1,\ldots,z_{n})\right)=z_1 v_1+\cdots+z_n v_n.
$$
In base $\mathcal{V}$, multiplication by $\beta$ is represented by the matrix
\[
M:=
\left(
\begin{array}{ccccc}
0      & \cdots & \cdots & 0      & a_n    \\
1      & \ddots &        & \vdots & \vdots \\
0      & \ddots & \ddots & \vdots & \vdots \\
\vdots & \ddots & \ddots & 0      & a_2    \\
0      & \cdots & 0      & 1      & a_1
\end{array}
\right)
\]
such that
$$
\varphi(\beta z)=\varphi(z)M.
$$
Let
$
\mathbf{e}:=\varphi(1)=(0,\ldots,0,1).
$
Since
$$
T(z)=\beta z-\{\beta z\}_p,
$$
the beta-transformation with respect to $\mathcal{V}$ takes the form
\begin{equation}\label{TV}
\sigma:\A_p^n\rightarrow \A_p^n
\qmboxq{with}
\mathbf{z}\mapsto \mathbf{z}\,M-\{\mathbf{z}\,M\mathbf{v}^\top\}_p\mathbf{e}.
\end{equation}
Then
$$
\varphi(T(z))=\sigma(\varphi(z))\qmboxq{and}T(\varphi^{-1}(\mathbf{z}))=\varphi^{-1}(\sigma(\mathbf{z})),
$$
which is indicated in the following commutative diagram:
\[
\xymatrix{
\A_p[\beta] \ar[r]^{T}      \ar@<3pt>[d]^{\varphi}      & \A_p[\beta] \ar@<3pt>[d]^{\varphi} \\
\A_p^{n}    \ar[r]_{\sigma} \ar@<3pt>[u]^{\varphi^{-1}} & \A_p^{n}    \ar@<3pt>[u]^{\varphi^{-1}}.
}
\]
Substituting $M$, $\mathbf{v}$ and $\mathbf{e}$ into (\ref{TV}),
we can express the map $\sigma$ as follows:
\begin{equation*}
\begin{aligned}
&\sigma:(z_1,\ldots,z_{n})\mapsto(z_2,\ldots,z_{n+1})\quad\mbox{with}\\
&z_{n+1}=a_n z_1+\cdots+a_1 z_{n}-\{a_n z_1+\cdots+a_1 z_{n}+v_1 z_2 +\cdots+v_{n-1}z_{n}\}_p.
\end{aligned}
\end{equation*}
The $k$th iterate of $\sigma$ is given by
$$
\sigma^k((z_1,\ldots,z_{n}))=(z_{k+1},\ldots,z_{k+n}),
$$
where
\begin{equation*}
z_{k+n}=U_k-\{U_k+V_k\}_p
\end{equation*}
with
\begin{equation}
\label{UV}
\begin{aligned}
U_k&:=a_n z_k+\cdots+a_1 z_{k+n-1}\quad\mbox{and}\\
V_k&:=v_1z_{k+1} +\cdots+v_{n-1}z_{k+n-1}.
\end{aligned}
\end{equation}
From \eqref{aibound}, \eqref{base2} and $|a_1|_p=|\beta|_p,$ it follows that
\begin{equation}
\label{vibound}
\max_{1\leq i\leq n-1}|v_i|_p<1
\end{equation}
and therefore,
$$
|V_k|_p\leq\max_{1\leq i\leq n-1}|v_{i}z_{k+i}|_p
<\max_{k+1\leq i\leq k+n-1}|z_{i}|_p.
$$
By Proposition~\ref{lemabs}, we get the following implications:
\begin{itemize}
\renewcommand{\labelenumi}{(\roman{enumi})}
\itemsep3pt
\item If $|V_k|_p>1$, it follows that $$|z_{k+n}|_p=|V_k|_p<\max_{k+1\leq i\leq k+n-1}|z_{i}|_p.$$
\item If $|V_k|_p\leq1$, it follows that $$|z_{k+n}|_p=|\lfloor U_k\rfloor_p|_p\leq1.$$
\end{itemize}
Therefore, there must exist some $k_0\geq1$ such that
\begin{equation}
\label{zibound}
\max_{0\leq j\leq n-1}|z_{k+j}|_p\leq1
\end{equation}
for all $k>k_0$.
Since $z_k\in \A_p$, it follows that
\begin{equation}
\label{zkinint}
\sigma^{k}((z_1,\ldots,z_n))=
(z_{k+1},\ldots,z_{k+n})\in\Z^n
\end{equation}
holds for all $k\geq k_0$.
From
\eqref{zibound},
it follows by \eqref{vibound}  that
$|V_k|_p<1.$
Therefore, $$\{U_k+V_k\}_p=\{U_k\}_p$$ and thus,
$$
z_{k+n}=U_k-\{U_k+V_k\}_p=\lfloor U_k\rfloor_p.
$$
Since $U_k\in \A_p$, it follows that
$$
\lfloor U_k\rfloor_p=\lfloor U_k\rfloor=-\lceil-U_k\rceil
$$
(\emph{cf.}~Remark~\ref{abspabsinf}).
Thus
\begin{align*}
\sigma((z_k,\ldots,z_{k+n-1}))
&=(z_{k+1},\ldots,z_{k+n-1},-\lceil-U_k\rceil)\\
&=\tau_\mathbf{-a}((z_k,\ldots,z_{k+n-1})).
\end{align*}
By \eqref{tautildetau1}, it follows that
\begin{align*}
\sigma^k((z_1,\ldots,z_n))
&=\tau_\mathbf{-a}^{k-k_0}(\sigma^{k_0}((z_1,\ldots,z_n)))\\
&=-\tilde\tau_\mathbf{-a}^{k-k_0}(-\sigma^{k_0}((z_1,\ldots,z_n)))
\end{align*}
for all $k\geq k_0$.
Since $\mathbf{-a}\in\mathcal{D}_n^0$, there must exist some $k\geq k_0$ with
$$
\sigma^k((z_1,\ldots,z_n))=\bf{0}.
$$
Thus, each $z\in \A_p[\beta^{-1}]\cap\Z_p$
admits a finite representation with respect to $\beta$.

\bigskip
Now we will prove the converse direction of Theorem~\ref{thmfin}. Suppose that
$$\A_p[\beta^{-1}]\cap\Z_p=\fin(\beta).$$
Since
$
\N\subset\fin(\beta)\subset\per(\beta),
$
it follows by
Theorem~\ref{conversebertrandschmidt}, that $\beta$ is a PC number or SC number.
Without loss of generality, we can assume that the minimal polynomial has the form \eqref{minpolfin}.
In order to exclude the case of SC numbers, we distinguish the following cases.

\smallskip
\begin{enumerate}
\renewcommand{\labelenumi}{(\roman{enumi})}
\itemsep3pt
  \item For $n=1$, there do not exist any SC numbers.
  \item For $n=2$, it is proved in \cite{srs2}, that $\mathcal{D}_2^0\cap\partial\mathcal{E}_2=\emptyset$.
  If there exists an archimedean root of \eqref{minpolfin} that is located on the complex unit circle,
  it follows that
  $-\mathbf{a}\in\partial\mathcal{E}_2$, which contradicts \eqref{aindn0}.
  \item Let $n>2$. If \eqref{minpolfin} is the minimal polynomial of a SC number, it must be
  self-reciprocal (\emph{cf.}~Remark~\ref{remsalem}). Then $a_1=a_{n-1}$, which contradicts \eqref{aibound}.
\end{enumerate}

\smallskip
\noindent
Thus, $\beta$ is a PC number.
We will prove now that each of the converses of \eqref{aibound} and \eqref{aindn0} contradict \eqref{propf}.

\smallskip
In order to prove the necessity of \eqref{aibound},
we follow the proofs of
\cite[Lemma~2.4]{ST2003} and
\cite[Theorem~5.4]{Scheicher:07}.
Suppose that
\begin{equation}
\label{aibound2}
\max_{2\leq i\leq n}|a_i|_p\geq|a_1|_p
\qmboxq{or equivalently}
\min_{2\leq i\leq n}\nu_p(a_i)\leq\nu_p(a_1).
\end{equation}
We will construct an element $z\in \A_p[\beta]$
that does not have a finite representation.

At first, we will prove that there exists some index $h\in\{1,\ldots,n-1\}$, such that
$\nu_p(v_h)\leq0.$
Define
\begin{equation}
\label{h0}
h_0
:=\max
\left\{
h\in\{2,\ldots,n\}
:\nu_p(a_h) =
\min_{2\leq j\leq n} \nu_p(a_j)
\right\},
\end{equation}
i.e. $h_0$ is the minimal index $h\in\{2,\ldots,n\}$ such that $\nu_p(a_h)$ attains its minimal value.
Then, by \eqref{aibound2} and \eqref{h0}, it follows that $2\leq h_0\leq n$ and thus
\begin{equation}
\label{nh01}
1\leq n-h_0+1\leq n-1.
\end{equation}
By \eqref{h0}, it follows that
$\nu_p(a_j)>\nu_p(a_{h_0})$
for all $j>h_0$ and thus, by \eqref{base},
\begin{align*}
\nu_p(v_{n-h_0+1})&=
\nu_p
\left(
\frac{a_{h_0}}{\beta}+\cdots+\frac{a_n}{\beta^{n-h_0+1}}
\right)\\
&=\nu_p(a_{h_0})-\nu_p(\beta)\leq\nu_p(a_{h_0})-\nu_p(a_1)\leq0.
\end{align*}
If
\begin{equation}
\label{i0}
i_0
:=\min
\left\{
i\in\{1,\ldots,n-1\}
:\nu_p(v_i) =
\min_{1\leq j\leq n-1} \nu_p(v_j)
\right\},
\end{equation}
then
$\nu_p(v_{i_0})\leq0.$
For $(z_1,\ldots,z_n)\in\A_p^n$, define
\begin{eqnarray*}
\lefteqn{j_0((z_1,\ldots,z_n)):=}\\
&:=&
\begin{cases}
\displaystyle
\max
\left\{ i\in\{2,\ldots,n\}
:\nu_p(z_i)=\min_{i_0+1\leq j\leq n}\nu_p
(z_j)\right\},&\hbox{if }
\displaystyle\min_{i_0+1\leq j\leq n}\nu_p(z_j)<0;\\
0,&\hbox{otherwise}.
\end{cases}
\end{eqnarray*}
Hence,
if $j_0((z_1,\ldots,z_n))>0,$ it follows that $(z_1,\ldots,z_n)\ne\mathbf{0}.$
If
$$
\mathbf{z}^{(0)} :=
(z_1^{(0)},\ldots,z_n^{(0)})=(0,\ldots,0,p^{-1}),
$$
then
$
j_0(\mathbf{z}^{(0)})=n
$
and thus,
\begin{equation}\label{s1}
i_0+1\leq j_0(\mathbf{z}^{(0)})\leq n.
\end{equation}
For $k\geq0$, let
$$
\mathbf{z}^{(k)}
:=(z_1^{(k)},\ldots,z_n^{(k)})
=(z_{k+1},\ldots,z_{k+n})
=\sigma^k(\mathbf{z}^{(0)}).
$$
We will show that $\mathbf{z}^{(0)}$ has an infinite representation by proving that
\begin{equation}\label{s2}
i_0+1\leq{j_0}(\mathbf{z}^{(k)})\leq n
\end{equation}
for all $k\geq0$ and thus,
$$
\mathbf{z}^{(k)}=\sigma^k(\mathbf{z}^{(0)})\ne\mathbf{0}.
$$
We will prove \eqref{s2} by induction.
By \eqref{s1}, equation \eqref{s2} holds for
$k=0$. Thus, we can proceed to the induction step.
Suppose that
\eqref{s2} holds for $k-1\geq0$ and note that
\begin{equation}\label{s3}
\mathbf{z}^{(k)} =(z_1^{(k)},\ldots,z_n^{(k)})=
(z_2^{(k-1)},\ldots,z_{n}^{(k-1)},z_n^{(k)}).
\end{equation}
Let ${j_0} := {j_0}(\mathbf{z}^{(k-1)})$.
We distinguish the following cases.

\medskip
\begin{enumerate}
\renewcommand{\labelenumi}{(\roman{enumi})}
\itemsep3pt
\item
Let $j_0>i_0+1$.
By \eqref{s3}, it follows that
\begin{eqnarray*}
\min_{i_0+1\leq j\leq n}\nu_p(z_j^{(k)}) &\leq&
\min\{\nu_p(z_{j_0}^{(k-1)}),\nu_p(z_n^{(k)})\}\\
&=&\min\{\nu_p(z_{{j_0}-1}^{(k)}),\nu_p(z_n^{(k)})\}
\qquad(\mbox{by the definition of }j_0)\\
&<&0.
\end{eqnarray*}
Thus ${j_0}(\mathbf{z}^{(k)})={j_0}-1$ or ${j_0}(\mathbf{z}^{(k)})=n$. Both of these
inequalities imply that
\begin{equation}
\label{ss2}
i_0+1\leq{j_0}(\mathbf{z}^{(k)})\leq n.
\end{equation}
\item
Let
$j_0=i_0+1$.
From \eqref{UV} it follows that
$$
\begin{aligned}
U_k&:=a_n z_1^{(k-1)}+\cdots+a_1 z_{n}^{(k-1)}\quad\mbox{and}\\
V_k&:=v_1z_{2}^{(k-1)}+\cdots+v_{n-1}z_{n}^{(k-1)}.
\end{aligned}
$$
The definitions of
$i_0$ and ${j_0}$ imply that
\begin{align*}
\nu_p(v_{i_0})        &\leq0,&\nu_p(v_i)          &>   \makebox[2.0cm][l]{$\nu_p(v_{i_0})$}        \qmboxq{for}i<i_0,\\
                      &      &\nu_p(v_i)          &\geq\makebox[2.0cm][l]{$\nu_p(v_{i_0})$}        \qmboxq{for}i\geq i_0,\\
\intertext{and}                      
\nu_p(z_{j_0}^{(k-1)})&<   0,&\nu_p(z_j^{(k-1)})  &>   \makebox[2.0cm][l]{$\nu_p(z_{j_0}^{(k-1)})$}\qmboxq{for}j>j_0,\\
                      &      &\nu_p(z_j^{(k-1)})  &\geq\makebox[2.0cm][l]{$\nu_p(z_{j_0}^{(k-1)})$}\qmboxq{for}j\leq j_0.
\end{align*}
Hence,
$$
\label{i0j0}
\nu_p(v_{i_0}z_{i_0+1}^{(k-1)})
=\nu_p(v_{i_0}z_{j_0}^{(k-1)})
<\nu_p(v_iz_{i+1}^{(k-1)})
\qmboxq{for}i\not=i_0.
$$
By \eqref{non-archimedean}, it follows that
$$
\nu_p(V_k)=
\nu_p(v_1z_{2}^{(k-1)}+\cdots+v_{n-1}z_{n}^{(k-1)})=
\nu_p(v_{i_0}z_{i_0+1}^{(k-1)})\leq\nu_p(z_{j_0}^{(k-1)}),
$$
and therefore, by Proposition~\ref{lemabs},
$$
\nu_p(z_n^{(k)})=\nu_p(U_k-\{U_k+V_k\}_p)=\nu_p(\{V_k\}_p)\leq\nu_p(z_{j_0}^{(k-1)}).
$$
Thus,
\begin{equation}
\label{ss3}
i_0+1\leq{j_0}(\mathbf{z}^{(k+1)})\leq n
\end{equation}
and we are done also in this case.
\end{enumerate}
Therefore, the necessity of \eqref{aibound} is proved.

\smallskip
Assume now that \eqref{propf} holds but \eqref{aindn0} does not hold, i.e.
$-\mathbf{a}\not\in\mathcal{D}_n^0.$
By \eqref{dn0}, there exists a $\mathbf{z}\in\Z^n$ such that $\tilde\tau_\mathbf{-a}^k(\mathbf{z})$
does not end up in $\mathbf{0}$. Therefore,
$$
\tau_\mathbf{-a}^k(-\mathbf{z})=-\tilde\tau_\mathbf{-a}^k(\mathbf{z})
$$
does not end up in $\mathbf{0}$. Thus, $\varphi^{-1}(-\mathbf{z})\in \A_p[\beta]$ does not have a finite expansion.
This is a contradiction to \eqref{propf} and
thus, \eqref{aindn0} is a necessary condition.
\end{proof}
\begin{rem}
Condition \eqref{aindn0} indicates a relation to canonical number systems.
In order to explain this connection in more detail, we recall Example~\ref{xpl}.
If the polynomial
$$
\tilde g(x)=\tilde a_n x^n+\tilde a_{n-1}x^{n-1}+\cdots+\tilde a_1 x+p^k\in\Z[x],
$$
gives raise to a canonical number system, then by \cite[Theorem~5.3]{SSTvW},
$$
\left(\frac{\tilde a_n}{p^k},\ldots,\frac{\tilde a_1}{p^k}\right)\in\mathcal{D}_n^0.
$$
Thus, the archimedean roots of
$$
\tilde f(x):=x^n\tilde g(\tfrac1x)=p^k x^n+\tilde a_1 x^{n-1}+\cdots+\tilde a_{n-1}x+\tilde a_n
$$
are strictly inside the complex unit circle. If we define
$$
a_j:=-\frac{\tilde a_j}{p^k}
$$
for $j\in\{1,\ldots,n\}$,
then the polynomial
$$
x^n-a_1 x^{n-1}-\cdots-a_n
$$
fulfills \eqref{aindn0}.
\end{rem}
\bibliographystyle{plain}

\end{document}